\DeclareMathOperator{\Tr}{Tr}
\DeclareMathOperator{\N}{N}
\title{On maximum additive Hermitian rank-metric codes}
\author{Rocco Trombetti and Ferdinando Zullo}
\date{}
\newcommand{\cC}{{\mathcal C}}
\newcommand{\C}{{\mathcal C}}
\newcommand{\cH}{{\mathcal H}}
\newcommand{\F}{{\mathbb F}}
\newcommand{\Aut}{\mathrm{Aut}}
\newcommand{\la}{\langle}
\newcommand{\ra}{\rangle}
\newcommand{\rk}{\mathrm{rk}}
\newtheorem{theorem}{Theorem}[section]
\newtheorem{lemma}[theorem]{Lemma}
\newtheorem{corollary}[theorem]{Corollary}
\newtheorem{remark}[theorem]{Remark}
\DeclareMathOperator{\PG}{\mathrm{PG}}
\newcommand\qbin[3]{\left[\begin{matrix} #1 \\ #2 \end{matrix} \right]_{#3}}
\begin{document}

\maketitle

\begin{abstract}
Inspired by the work of Zhou ``On equivalence of maximum additive symmetric rank-distance codes'' (2020) based on the paper of Schmidt ``Symmetric bilinear forms over finite fields with applications to coding theory'' (2015), we investigate the equivalence issue of maximum $d$-codes of Hermitian matrices.
More precisely, in the space $\mathrm{H}_n(q^2)$ of Hermitian matrices over $\F_{q^2}$ we have two possible equivalence: the classical one coming from the maps that preserve the rank in $\F_{q^2}^{n\times n}$, and the one that comes from restricting to those maps preserving both the rank and the space $\mathrm{H}_n(q^2)$.
We prove that when $d<n$ and the codes considered are maximum additive $d$-codes and $(n-d)$-designs, these two equivalence relations coincide. As a consequence, we get that the idealisers of such codes are not distinguishers, unlike what usually happens for rank metric codes.
Finally, we deal with the combinatorial properties of known maximum Hermitian codes and, by means of this investigation, we present a new family of maximum Hermitian $2$-code, extending the construction presented by Longobardi et al. in ``Automorphism groups and new constructions of maximum additive rank metric codes with restrictions'' (2020).
\end{abstract}

\thanks{{\bf MSC 2010}: 05E15, 05E30, 51E22}\\

\thanks{{\bf Keywords}: Hermitian matrix, rank metric code, linearized polynomial.}\\

{\footnotesize\thanks{This research was partially supported by the Italian National Group for Algebraic and Geometric Structures and their Applications (GNSAGA - INdAM). The last author was also supported by the project ''VALERE: VAnviteLli pEr la RicErca" of the University of Campania ''Luigi Vanvitelli''.}}

\section{Introduction}

Let us consider $\F_q^{n\times n},$ the set of the square matrices of order $n$ defined over $\F_q$, with $q$ a prime power.
It is well-known that $\F_q^{n\times n}$ equipped with
\[ d(A,B)=\rk (A-B), \]
where $A,B \in \F_q^{n\times n}$, is a metric space.
If $\mathrm{C}$ is a subset of $\F_q^{n\times n}$ with the property that for each $A,B \in \mathrm{C}$ then $d(A,B)\geq d$ with $1\leq d\leq n$, then we say that $\mathrm{C}$ is a $d$-\emph{code}.
Furthermore, we say that $\mathrm{C}$ is \emph{additive} if $\mathrm{C}$ is an additive subgroup of $(\F_q^{n\times n},+)$, and $\mathrm{C}$ is $\F_q$-\emph{linear} if $\mathrm{C}$ is an $\F_q$-subspace of $(\F_q^{n\times n},+,\cdot)$, where $+$ is the classical matrix addition and $\cdot$ is the scalar multiplication by an element of $\F_q$.
Delsarte in \cite{Delsarte} shows the following bound for a $d$-code $\C$
\[ |\mathrm{C}| \leq q^{n(n-d+1)}, \]
known as \emph{Singleton like bound}, see also \cite{Gabidulin}.
Codes whose parameters satisfy the aforementioned bound are known as \emph{maximum rank distance codes} (or shortly \emph{MRD}-codes), and they have several important applications.
Attention has been paid also to rank metric codes with restrictions, which are codes whose words are alternating matrices \cite{DelsarteGoethals}, symmetric matrices \cite{LLTZ,Schmidt2010,Schmidt2015,Zhou} and Hermitian matrices \cite{Schmidt2018}.

In this paper we deal with Hermitian matrices over $\F_{q^2}$.

Consider $\overline{\cdot} \colon x \in \F_{q^2}\mapsto x^q\in \F_{q^2}$ the conjugation map over $\F_{q^2}$.
Let $A\in \F_{q^2}^{n\times n}$ and denote by $A^*$ the matrix obtained from $A$ by conjugation of each entry and transposition.
A matrix $A\in \F_{q^2}^{n\times n}$ is said \emph{Hermitian} if $A^*=A$.
Denote by ${\rm H}_{n}(q^2)$ the set of all Hermitian matrices of order $n$ over $\F_{q^2}$.
In \cite[Theorem 1]{Schmidt2018}, Schmidt proved that if $C$ is an additive $d$-code contained in ${\rm H}_{n}(q^2)$, then
\begin{equation}\label{eq:SingletonBoundHermitian}
|\mathrm{C}| \leq q^{n(n-d+1)}.
\end{equation}
When the parameters of $\mathrm{C}$ satisfy the equality in this bound, we say that $\mathrm{C}$ is a \emph{maximum} (additive) Hermitian $d$-code.
Schmidt also provided constructions of maximum $d$-codes for all possible value of $n$ and $d$, except if $n$ and $d$ are both even and $3<d<n$ \cite[Theorems 4 and 5]{Schmidt2018}. When $d=2$ and when $d=n$, it is easy to exhibit constructions of maximum additive $d$-codes. For instance, when $d=n$ a {\it semifield spread set} of symmetric $n\times n$ matrices over $\F_q$, gives rise to an example of maximum $n$-code of  ${\rm H}_n(q^2)$. For $d=2$, instead, we can take all matrices in ${\rm H}_n(q^2)$ whose main diagonal contains only zeros.

For given $a\in \F_q^*$, $\rho \in \mathrm{Aut}(\F_{q^2})$, $A \in \mathrm{GL}(n,q^2)$ and $B \in {\rm H}_{n}(q^2)$, the map
\begin{equation}\label{eq:theta}
\Theta\colon C \in {\rm H}_{n}(q^2) \mapsto aAC^\rho A^*+B \in {\rm H}_{n}(q^2),
\end{equation}
where $C^\rho$ is the matrix obtained from $C$ by applying $\rho$ to each of its entry, preserves the rank distance and conversely, see \cite{Wan}. For two subset $\mathrm{C}_1$ and $\mathrm{C}_2$ of ${\rm H}_{n}(q^2)$, if there exists $\Theta$ as in \eqref{eq:theta} such that
\[ \mathrm{C}_2=\{\Theta(C) \colon C \in \mathrm{C}_1\} \]
we say that $\mathrm{C}_1$ and $\mathrm{C}_2$ are \emph{equivalent in} ${\rm H}_{n}(q^2)$.
Nevertheless, we may consider the maps of $\F_{q^2}^{n\times n}$ preserving the rank distance, which by \cite{Wan} are all of the following kind
\begin{equation}\label{equivrm}
\Psi \colon C \in \F_{q^2}^{n\times n} \mapsto AC^\sigma B+R \in \F_{q^2}^{n\times n}
\end{equation}
or
\[ \Psi \colon C \in \F_{q^2}^{n\times n} \mapsto A(C^\sigma)^T B+R \in \F_{q^2}^{n\times n},\]
where $A,B \in \mathrm{GL}(n,q^2)$, $\sigma \in \mathrm{Aut}(\F_{q^2})$, $R\in \F_{q^2}^{n\times n}$ and $C^T$ denotes the transpose of $C$.
For two subset $\mathrm{C}_1$ and $\mathrm{C}_2$ of ${\rm H}_{n}(q^2)$, if there exists $\Psi$ as above such that
\[ \mathrm{C}_2=\{\Psi(C) \colon C \in \mathrm{C}_1\} \]
we say that $\mathrm{C}_1$ and $\mathrm{C}_2$ are said \emph{extended equivalent}.
Clearly, if $\mathrm{C}_1$ and $\mathrm{C}_2$ of ${\rm H}_{n}(q^2)$ are equivalent in ${\rm H}_{n}(q^2)$, they are also extended equivalent.
However, when maximum $d$-codes are considered, the converse statement is not true. In fact, from what Yue Zhou points out in \cite{Zhou}, it follows that constructions of commutative {\it semifields} exhibited in \cite{Coulter_Henderson} and in \cite{Zhou_Pott} provide examples of maximum $n$-codes in ${\rm H}_{n}(q^2)$ say $\mathrm{C}$, with the property that there exist $A,B \in \mathrm{GL}(n,q^2)$ such that
$$A \mathrm{C} B \subseteq{\rm H}_{n}(q^2),$$
where $A \neq a  B^*$ for each $a \in \F_{q}$.

Along the lines of what has been done by Zhou in \cite{Zhou}, in Section \ref{sec:equivalence} we will investigate on the conditions that guarantee the identification of the aforementioned types of equivalence for maximum Hermitian $d$-codes.
Results in Section \ref{sec:equivalence} heavily rely on what Schmidt proven in \cite{Schmidt2018} using the machinery of association schemes.
Moreover, in Section \ref{sec:ideal} we will show that providing such conditions hold true for a $d$-code $\mathrm{C} \in {\rm H}_{n}(q^2)$, then its {\it idealisers} are both isomorphic to $\F_{q^2}$, and hence they cannot be used as {\it distinguisher}, similarly to what happens in the symmetric setting as proved in \cite{Zhou}.

\medskip
\noindent In Section \ref{sec:setting}, following \cite{LLTZ}, we introduce the Hermitian setting from a polynomial point of view, where some properties are easier to establish. Indeed, we show some combinatorial properties of the known constructions of maximum Hermitian codes.
Finally, in Section \ref{sec:construction} we extend the construction presented in \cite{LLTZ} yielding an example of maximum Hermitian $2$-code and, relying on the results of the previous sections, we are able to show that it is also new.

\section{The association scheme of Hermitian matrices}

By \cite[Section 9.5]{BrouwerCohen} we have that ${\rm H}_n(q^2)$ gives rise to an association scheme whose classes are
\[ (A,B)\in R_i \Leftrightarrow \rk(A-B)=i. \]
Let $\chi \colon \F_q \rightarrow \mathbb{C}$ be a nontrivial character of $(\F_q,+)$ and let
\[ \la A,B \ra=\chi(\mathrm{tr}(A^*B)), \]
with $A,B \in {\rm H}_n(q^2)$ and $\mathrm{tr}$ denotes the matrix trace.
Denoting by $\rm{H}_i$ the subset of ${\rm H}_n(q^2)$ of matrices having rank equal to $i$, the \emph{eigenvalues} of such association scheme are
\[ Q_k(i)=\sum_{A \in \cH_k} \la A,B\ra,\,\,\,\text{for}\,\, B \in \cH_i, \]
with $i,k \in \{0,1,\ldots,n\}$, see \cite{CarlitzHodges,Schmidt2018,Stanton}.

Let $\mathrm{C} \subseteq {\rm H}_n(q^2)$. The \emph{inner distribution} of $\mathrm{C}$ is $(A_0,A_1,\ldots,A_n)$ of rational numbers given by
\[ A_i=\frac{|(\mathrm{C}\times \mathrm{C})\cap R_i|}{|\mathrm{C}|}. \]
Therefore, $\mathrm{C}$ is a $d$-code if and only if
\[ A_1=\ldots=A_{d-1}=0. \]
The \emph{dual inner distribution} of $\mathrm{C}$ is $(A_0',A_1',\ldots,A_n')$ where
\[ A_k'=\sum_{i=0}^n Q_k(i) A_i. \]
Also, we have that $A_0'=|\mathrm{C}|$,  $A_k'\geq 0$ for each $k \in \{0,1,\ldots,n\}$ and if $\mathrm{C}$ is additive then $|\mathrm{C}|$ divides $A_i'$ for each $i\in \{0,\ldots,n\}$.

If $A_1'=\ldots=A_t'=0$, we say that $\mathrm{C}$ is a $t$-\emph{design}. Of course, if $\mathrm{C}$ is additive the $A_i$'s count the number of matrices in $\mathrm{C}$ of rank $i$ with $i\in \{0,1,...,n\}$.

Moreover, in such a case we can associate with $\mathrm{C}$ its dual in  ${\rm H}_n(q^2)$; i.e.,
$$\mathrm{C}^{\perp}=\{X \in {\rm H}_n(q^2) \, \colon \, \langle X,Y\rangle=1 \, \text{ for each } \, Y \in \mathrm{C} \},$$ and it is possible to show that the coefficients $\frac{A_k'}{|\mathrm{C}|}$ count exactly the number of matrices in $\mathrm{C}^{\perp}$ of rank $i$ with $i\in \{0,1,...,n\}$.

Also in \cite{Schmidt2018} the author proved the following results on combinatorial properties of maximum additive Hermitian $d$-codes when $d$ is odd.

\begin{theorem}\cite[Theorem 1]{Schmidt2018}\label{th:maxdodd}
If $\mathrm{C} \subseteq {\rm H}_{n}(q^2)$ is a Hermitian additive $d$-code with odd $d$, then it is maximum if and only if $\mathrm{C}$ is an $(n-d+1)$-design.
\end{theorem}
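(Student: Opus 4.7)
The plan is to establish the two directions of the equivalence separately, using the $\F_q$-duality on ${\rm H}_n(q^2)$. Recall that, as remarked just before the statement, for an additive code $\mathrm{C}$ the coefficients $A_i'/|\mathrm{C}|$ count the rank-$i$ matrices of $\mathrm{C}^{\perp}$, and non-degeneracy of $\langle\cdot,\cdot\rangle$ on the $\F_q$-vector space ${\rm H}_n(q^2)$, which has $\F_q$-dimension $n^2$, yields $|\mathrm{C}|\cdot|\mathrm{C}^{\perp}|=q^{n^2}$. For the implication ``design $\Rightarrow$ maximum'', the vanishing $A_1'=\cdots=A_{n-d+1}'=0$ forces $\mathrm{C}^{\perp}$ to contain no nonzero matrix of rank at most $n-d+1$, so $\mathrm{C}^{\perp}$ is itself an additive $(n-d+2)$-code. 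Applying \eqref{eq:SingletonBoundHermitian} to $\mathrm{C}^{\perp}$ gives $|\mathrm{C}^{\perp}|\le q^{n(d-1)}$, hence $|\mathrm{C}|\ge q^{n(n-d+1)}$, and \eqref{eq:SingletonBoundHermitian} applied to $\mathrm{C}$ itself makes this an equality.

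For the converse ``maximum $\Rightarrow$ design'', I would proceed by the Delsarte linear-programming method. For any polynomial $F(X)=\sum_{k=0}^{n}\beta_k Q_k(X)$ with $\beta_0>0$, $\beta_k\ge 0$ for all $k$, and $F(i)\le 0$ for every $i\in\{d,\ldots,n\}$, substitution of the MacWilliams identity $A_k'=\sum_i Q_k(i) A_i$ together with $A_0=1$, $A_1=\cdots=A_{d-1}=0$ and $A_i\ge 0$ for $i\ge d$ gives
\[
\beta_0|\mathrm{C}|\;\le\;\sum_{k=0}^{n}\beta_k A_k'\;=\;F(0)+\sum_{i=d}^{n}F(i)A_i\;\le\;F(0).
\]
The concrete task is to construct such an $F$ with $F(0)/\beta_0=q^{n(n-d+1)}$ and $\beta_k>0$ exactly for $k\in\{0,1,\ldots,n-d+1\}$. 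Given such an $F$, the maximality hypothesis $|\mathrm{C}|=q^{n(n-d+1)}$ turns every inequality above into an equality, so $\sum_{k=1}^{n-d+1}\beta_k A_k'=0$, and from $\beta_k>0$ and $A_k'\ge 0$ we conclude $A_1'=\cdots=A_{n-d+1}'=0$, which is precisely the $(n-d+1)$-design property.

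The main obstacle is the construction of this LP certificate $F$. It must be assembled from explicit evaluations of the $q$-Krawtchouk-type eigenvalues $Q_k$ of the Hermitian scheme (cf.~\cite{CarlitzHodges,Stanton}), and the simultaneous positivity of the $\beta_k$ on $\{0,\ldots,n-d+1\}$ together with the non-positivity of $F$ on $\{d,\ldots,n\}$ is a delicate $q$-analogue positivity statement. The parity of $d$ is decisive here: for odd $d$ the natural candidate polynomial has its zeros placed suitably outside the forbidden range, whereas for even $d$ a zero intrudes into $\{d,\ldots,n\}$, the certificate breaks down, and even the conclusion itself can fail. This is the reason the statement is restricted to odd $d$.
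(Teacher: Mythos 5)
This statement is quoted by the paper from \cite[Theorem~1]{Schmidt2018} without proof, so there is no internal argument to compare against; your proposal has to be judged as a self-contained proof, and as such it has a genuine gap. The direction ``$(n-d+1)$-design $\Rightarrow$ maximum'' is complete and correct: since $\mathrm{C}$ is additive, $A_k'/|\mathrm{C}|$ counts the rank-$k$ elements of $\mathrm{C}^{\perp}$, so the vanishing of $A_1',\ldots,A_{n-d+1}'$ makes $\mathrm{C}^{\perp}$ an additive $(n-d+2)$-code, and combining \eqref{eq:SingletonBoundHermitian} for $\mathrm{C}^{\perp}$ with $|\mathrm{C}|\cdot|\mathrm{C}^{\perp}|=q^{n^2}$ gives $|\mathrm{C}|=q^{n(n-d+1)}$. (Note this half does not use that $d$ is odd.)

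The converse is where the content of the theorem lives, and there you only describe the linear-programming template: you correctly reduce everything to exhibiting $F(X)=\sum_k\beta_kQ_k(X)$ with $\beta_0>0$, $\beta_k>0$ exactly for $k\le n-d+1$, $\beta_k=0$ beyond, $F(i)\le 0$ for $i\ge d$, and $F(0)/\beta_0=q^{n(n-d+1)}$ --- but you never produce $F$ or verify any of these sign conditions. That construction is precisely the hard part: in Schmidt's argument the certificate comes from a moment identity of the form $\sum_i\qbin{n-i}{n-d+1}{}A_i=\qbin{n}{n-d+1}{}$ (valid for any $d$-code, since $\qbin{n-i}{n-d+1}{}$ vanishes for $i\ge d$), whose expansion in the dual distribution has coefficients built from the negative $q$-binomials of \eqref{eq:propqbin}; the oddness of $d$ is exactly what makes those coefficients nonnegative, and without carrying out that computation one cannot even recover the bound \eqref{eq:SingletonBoundHermitian}, let alone the equality analysis. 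Your closing paragraph acknowledges this, but an acknowledged missing certificate is still a missing proof: as written, the ``maximum $\Rightarrow$ design'' direction is a plan, not an argument.
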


Consider $m$ and $\ell$ two non-negative integers,  \emph{negative $q$-binomial coefficient} is defined as

\[ \qbin{m}{\ell}{}= \prod_{i=1}^\ell \frac{(-q)^{m-i+1}-1}{(-q)^i-1}. \]

We will need the following property for negative $q$-binomial coefficients. Let $k$ and $i$ be two non-negative integers, then

\begin{equation}\label{eq:propqbin}
    \sum_{j=i}^k (-1)^{j-i} (-q)^{\binom{j-i}{2}} \qbin{j}{i}{}\qbin{k}{j}{}=\delta_{k,i},
\end{equation}

where $\delta_{k,i}$ is the Kronecker delta function, see \cite[Equation (6)]{Schmidt2018} and \cite[Equation (10)]{DelsarteGoethals}.

If $\mathrm{C}$ is a Hermitian additive $d$-code and a $(n-d)$-design, then its inner distribution has beeen determined.

\begin{theorem}\cite[Theorem 3]{Schmidt2018}\label{th:theorem3}
If $\mathrm{C}$ is a Hermitian additive $d$-code and a $(n-d)$-design, then
\[ A_{n-i}=\sum_{j=i}^{n-d} (-1)^{j-i}(-q)^{\binom{j-i}{2}} \qbin{j}{i}{}\qbin{n}{j}{}\left( \frac{|\mathrm{C}|}{q^{nj}}(-1)^{(n+1)j}-1 \right), \]
for each $i \in \{0,1,\ldots,n-1\}$.
\end{theorem}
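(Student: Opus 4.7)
The plan is to combine the MacWilliams-type duality supplied by the Hermitian association scheme with the $q$-binomial orthogonality identity \eqref{eq:propqbin}. First, I would translate the two hypotheses into relations on the inner and dual inner distributions: the $d$-code assumption yields $A_0=1$ and $A_1=\cdots=A_{d-1}=0$, whereas the $(n-d)$-design assumption gives $A_0'=|\mathrm{C}|$ and $A_1'=\cdots=A_{n-d}'=0$. In the duality relation $A_k'=\sum_{i=0}^n Q_k(i)A_i$ these vanishings reduce the problem to an $(n-d+1)\times(n-d+1)$ linear system in the unknowns $A_d,A_{d+1},\ldots,A_n$, whose right-hand side is determined by the constants $|\mathrm{C}|$ and $1$.

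Next, I would substitute the explicit closed form for the eigenvalues $Q_k(i)$ of the Hermitian matrix association scheme (see \cite{CarlitzHodges,Schmidt2018,Stanton}); these are expressed through the negative $q$-binomial coefficients $\qbin{\cdot}{\cdot}{}$ and carry the characteristic factor $(-q)^{\binom{\cdot}{2}}$ reflecting the $q\leftrightarrow-q$ substitution that distinguishes the Hermitian scheme from the bilinear-forms one. After re-indexing with $A_{n-i}$ in place of $A_i$, the coefficient matrix of the reduced system becomes upper triangular, with its $(i,j)$-entry essentially $(-q)^{\binom{j-i}{2}}\qbin{j}{i}{}$, normalized by $\qbin{n}{j}{}$ and by the factor $(-1)^{(n+1)j}q^{nj}=Q_j(n)$.

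I would then invert this triangular system by the standard device: multiply the $k$-th equation by $(-1)^{k-i}(-q)^{\binom{k-i}{2}}\qbin{k}{i}{}$ and sum over $k$ from $i$ to $n-d$. The orthogonality identity \eqref{eq:propqbin} collapses the inner sum to a Kronecker delta $\delta_{k,i}$, so only $A_{n-i}$ survives on the left-hand side. On the right-hand side, the contribution of $A_0'=|\mathrm{C}|$ produces the term $(-1)^{(n+1)j}|\mathrm{C}|/q^{nj}$ (through $Q_j(n)$), while the single surviving value $A_0=1$ contributes the residual $-1$; assembling these gives exactly the displayed formula. Note also that for $i>n-d$ the summation range is empty, so the formula correctly returns $A_{n-i}=0$, in agreement with the $d$-code condition.

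The main obstacle will be the careful bookkeeping of signs and of the powers of $(-q)$ coming from the negative-$q$ Krawtchouk-type structure of the Hermitian scheme: both the factor $(-q)^{\binom{j-i}{2}}$ and the global sign $(-1)^{(n+1)j}$ must be tracked through the explicit form of $Q_k(i)$ and then through the inversion step. A secondary technical point is verifying that the explicit value of $Q_j(n)$ matches $(-1)^{(n+1)j}q^{nj}$, so that the distribution of the contribution of $|\mathrm{C}|$ over $j\in\{i,\ldots,n-d\}$ coincides with the one appearing in the statement.
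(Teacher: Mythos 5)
The paper does not prove this statement: it is quoted verbatim from \cite[Theorem 3]{Schmidt2018}, so there is no internal proof to compare your attempt against. Your outline reconstructs the argument actually used in that reference: the $(n-d)$-design condition forces the moments $\sum_{m}\qbin{n-m}{j}{}A_m$ to equal $\qbin{n}{j}{}\,|\mathrm{C}|\,(-1)^{(n+1)j}q^{-nj}$ for $0\le j\le n-d$, and the orthogonality relation \eqref{eq:propqbin} then inverts this triangular system; the structure is sound, including your observations that the $A_0=1$ term produces the residual $-1$ and that the truncation of the sum at $j=n-d$ is legitimate because $\qbin{n-m}{j}{}=0$ for $m\ge d$ and $j>n-d$. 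What is deferred is genuinely only bookkeeping (the explicit eigenvalues $Q_k(i)$ and the sign tracking), so as a blind reconstruction of the cited proof this is essentially correct and follows the same route.
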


\section{The equivalence issue for maximum codes}\label{sec:equivalence}

Following the paper of Zhou \cite{Zhou}, we may generalize his considerations to the Hermitian setting.

Let $\mathrm{C}$ be a subset of $\F_{q^2}^{n\times n}$ and let $\mathbf{0}$ be the zero vector in $\F_{q^2}^n$. In \cite{LTZ2} the authors define the following incidence structure

\[ S(\infty)=\{(\mathbf{0},\mathbf{y}) \colon \mathbf{y}\in \F_{q^2}^n\}, \]
\[ S(X)=\{(\mathbf{x},\mathbf{x}X) \colon \mathbf{x}\in \F_{q^2}^n\}, \,\, \text{for}\,\, X \in \mathrm{C}. \]
The \emph{kernel} $K(\mathrm{C})$ of $\mathrm{C}$ is defined as the set of all the endomorphism $\mu$ of the group $(\F_{q^2}^{2n},+)$ such that $S(X)^\mu\subseteq S(X)$ for every $X \in \mathrm{C} \cup \{\infty\}$.
The following result has been proved in \cite{LTZ2}.

\begin{lemma}\label{lemma:LTZ2}
Let $\mathrm{C}$ be a subset of $\F_{q^2}^{n\times n}$.
\begin{enumerate}
  \item [(a)] The kernel of $\mathrm{C}$ is a ring under addition and composition of maps.
  \item [(b)] If $\mathrm{C}_1$ and $\mathrm{C}_2$ are two equivalent rank metric codes in $\F_{q^2}^{n\times n}$, then their kernels are equivalent in $\F_{q^2}^{n\times n}$.
  \item [(c)] Let $I_{n}$ denote the identity matrix of $\F_{q^2}^{n\times n}$. The set of matrices $\{aI_{n+n} \colon a \in \F_{q^2}\}$ forms a field isomorphic to $\F_{q^2}$ contained in $K(\mathrm{C})$.
  \item [(d)] Let $O$ be the zero matrix in $\F_{q^{2}}^{n\times n}$. If $O\in \mathrm{C}$, then each element of $K(\mathrm{C})$ must be of the form
      \begin{equation}\label{eq:kernel}
      \left(\begin{array}{cccc} N_1 & O \\ O & N_2 \end{array} \right),
      \end{equation}
       where $N_1,N_2 \in \mathrm{End}(\mathbb{F}_{q^2}^n,+)$.
\end{enumerate}
\end{lemma}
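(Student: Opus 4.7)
The plan is to verify the four statements by directly unwinding the definition of $K(\mathrm{C})$. For (a), each $S(X)$ is an additive subgroup of $(\F_{q^2}^{2n},+)$, so the membership condition $S(X)^\mu\subseteq S(X)$ is preserved by pointwise addition and by composition of endomorphisms. The zero and identity maps satisfy it trivially, and associativity and distributivity are inherited from the ambient ring $\mathrm{End}(\F_{q^2}^{2n},+)$; hence $K(\mathrm{C})$ is a subring.

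For (c), the assignment $a\mapsto aI_{2n}$ is a ring monomorphism from $\F_{q^2}$ into $\mathrm{End}(\F_{q^2}^{2n},+)$, so its image is a field isomorphic to $\F_{q^2}$. It lies in $K(\mathrm{C})$ because for $(\mathbf{v},\mathbf{v}X)\in S(X)$ one has $(\mathbf{v},\mathbf{v}X)\cdot aI_{2n}=(a\mathbf{v},(a\mathbf{v})X)\in S(X)$, and the preservation of $S(\infty)$ is immediate. For (d), having $O\in\mathrm{C}$ forces $S(O)=\{(\mathbf{x},\mathbf{0}):\mathbf{x}\in \F_{q^2}^n\}$ to be among the preserved subspaces; together with $S(\infty)=\{(\mathbf{0},\mathbf{y}):\mathbf{y}\in \F_{q^2}^n\}$ one obtains $\F_{q^2}^{2n}=S(\infty)\oplus S(O)$, and writing any $\mu\in K(\mathrm{C})$ in block form with respect to this decomposition, the condition $\mu(S(\infty))\subseteq S(\infty)$ annihilates the upper-right block while $\mu(S(O))\subseteq S(O)$ annihilates the lower-left block, yielding \eqref{eq:kernel}.

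The main obstacle will be (b), which requires a precise reading of what "equivalent kernels" means and a careful geometric translation between the two incidence structures. My strategy is to show that every equivalence $\Psi$ as in \eqref{equivrm} sending $\mathrm{C}_1$ onto $\mathrm{C}_2$ lifts to an invertible (semi)linear map $\varphi$ of $\F_{q^2}^{2n}$ carrying the family $\{S(X):X\in \mathrm{C}_1\cup\{\infty\}\}$ onto $\{S(Y):Y\in \mathrm{C}_2\cup\{\infty\}\}$. Concretely, $\varphi$ is assembled from the block data $A,B$ in $\Psi$ (augmented by a coordinate swap in the transposition variant of the equivalence) together with the entrywise Frobenius $\sigma$, while the translation $R$ simply shifts the base point and acts trivially on endomorphisms. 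Once $\varphi$ has been produced, conjugation $\mu\mapsto \varphi^{-1}\mu\varphi$ is the sought ring isomorphism $K(\mathrm{C}_1)\to K(\mathrm{C}_2)$. The delicate point is the bookkeeping of right versus left actions on row vectors, so that the matrices $A,B$ act on the appropriate factor of $\F_{q^2}^{2n}$, and the semilinear twist by $\sigma$ must be tracked consistently across both components; once this is done, the conclusion follows by direct verification on each $S(X)$.
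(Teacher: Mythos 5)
The paper does not actually prove this lemma: it is imported from \cite{LTZ2} (``The following result has been proved in \cite{LTZ2}''), so there is no in-paper argument to measure yours against. On its own merits, your treatment of (a), (c) and (d) is correct and is the standard unwinding of the definitions; in particular the block-diagonal shape in (d) follows exactly as you say from the two complementary invariant subspaces $S(O)$ and $S(\infty)$, and (c) only uses that scalars of $\F_{q^2}$ commute with the right action of $X$.

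The gap is in (b), precisely at the device you propose for the transposition variant of \eqref{equivrm}. The coordinate swap $(\mathbf{x},\mathbf{y})\mapsto(\mathbf{y},\mathbf{x})$ sends $S(X)$ to $\{(\mathbf{x}X,\mathbf{x})\colon \mathbf{x}\in\F_{q^2}^n\}$, which equals $S(X^{-1})$ when $X$ is invertible and is not of the form $S(Y)$ at all when $X$ is singular; it never produces $S(X^T)$. Worse, no invertible semilinear map of $\F_{q^2}^{2n}$ can do the job: writing $\varphi$ in block form, preservation of $S(\infty)$ forces the block sending the second factor to the first to vanish, and the requirement $\varphi(S(X))=S(A(X^\sigma)^TB+R)$ then reduces (for an additive code containing $O$) to $X^\sigma V=PA(X^\sigma)^TB$ for all $X\in\mathrm{C}_1$, i.e.\ transposition restricted to the code would be realized by two-sided multiplication, which is false in general. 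The transpose has to be handled instead via the adjoint with respect to the form $\langle(\mathbf{x}_1,\mathbf{y}_1),(\mathbf{x}_2,\mathbf{y}_2)\rangle=\mathbf{x}_1\mathbf{y}_2^T-\mathbf{y}_1\mathbf{x}_2^T$, for which $S(X)^{\perp}=S(X^T)$; this gives an anti-isomorphism of kernels rather than a conjugation, and is how \cite{LTZ2} treats it (there ``equivalence'' is defined without the transpose, and the adjoint is discussed separately). Your lift $\varphi(\mathbf{x},\mathbf{y})=(\mathbf{x}^\sigma A^{-1},\,\mathbf{y}^\sigma B+\mathbf{x}^\sigma A^{-1}R)$ is correct for the non-transpose maps, and since every use of (b) in this paper is of that type (on ${\rm H}_n(q^2)$ transposition amounts to an entrywise field automorphism), the gap does not propagate; but as a proof of (b) as you stated it, the transposition case is not established.
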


As a consequence we can prove the following result.

\begin{lemma}\label{lemma:kernelfield}
Let $\mathrm{C}$ be a subset of ${\rm H}_n(q^2)$ containing $O$ and $I_n$.
If there are no trivial subspaces $U$ and $W$ such that
\begin{itemize}
  \item $\F_{q^2}^n=U\oplus W$;
  \item $\{\mathbf{u}X \colon \mathbf{u} \in U, X \in \mathrm{C}\}\subseteq U$;
  \item $\{\mathbf{w}X \colon \mathbf{w} \in W, X \in \mathrm{C}\}\subseteq W$,
\end{itemize}
then the kernel of $\mathrm{C}$ is isomorphic to a finite field containing $\F_{q^2}$.
\end{lemma}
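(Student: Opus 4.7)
The plan is to represent $K(\mathrm{C})$ as a centralizer ring of $\mathrm{C}$ inside $\mathrm{End}(\F_{q^2}^n,+)$, show it has no nonzero zero divisors, and then conclude via Wedderburn's little theorem that $K(\mathrm{C})$ is a finite field containing $\F_{q^2}$.

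First I would apply Lemma~\ref{lemma:LTZ2}(d), which writes every $\mu\in K(\mathrm{C})$ in the block form $\mu=\begin{pmatrix} N_1 & O \\ O & N_2 \end{pmatrix}$. Unpacking $S(X)^\mu\subseteq S(X)$ for an arbitrary $X\in \mathrm{C}$ gives $(\mathbf{x}N_1)X=(\mathbf{x}X)N_2$ for every $\mathbf{x}\in\F_{q^2}^n$, and specializing to $X=I_n\in\mathrm{C}$ forces $N_1=N_2=:N$; at a general $X\in\mathrm{C}$ the identity then reads $N\circ R_X=R_X\circ N$, where $R_X$ is right multiplication by $X$. Hence $\mu\mapsto N$ is a ring isomorphism between $K(\mathrm{C})$ and the centralizer
\[
K_0:=\{N\in\mathrm{End}(\F_{q^2}^n,+)\mid N\circ R_X=R_X\circ N\text{ for every } X\in\mathrm{C}\},
\]
and Lemma~\ref{lemma:LTZ2}(c) embeds $\F_{q^2}$ into $K_0$ as the scalar multiplications.

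Next I would take $N\in K_0\setminus\{0\}$ and aim to show $N$ is invertible; once every nonzero element of $K_0$ is a unit, finiteness and Wedderburn's little theorem force $K_0$ to be a finite field, evidently containing $\F_{q^2}$. Fitting's lemma applied to $N$ in the finite ring $\mathrm{End}(\F_{q^2}^n,+)$ supplies a direct sum decomposition
\[
\F_{q^2}^n=\ker(N^s)\oplus\mathrm{Im}(N^s)
\]
for $s$ sufficiently large, on which $N$ is respectively nilpotent and invertible. Since each power $N^s$ again lies in $K_0$, both summands are additive subgroups of $\F_{q^2}^n$ stable under right multiplication by every $X\in\mathrm{C}$. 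Once both summands can be seen to be $\F_{q^2}$-subspaces, the hypothesis of the lemma forces one of them to vanish; the nilpotent alternative $\mathrm{Im}(N^s)=\{0\}$ is then excluded by iterating the argument on $\ker(N)$ together with an $\F_{q^2}$-stable, $\mathrm{C}$-invariant complement produced by the same mechanism.

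The crux --- and the main obstacle --- is therefore the upgrade from an additive decomposition to one into $\F_{q^2}$-subspaces, which is the same as asking that every element of $K_0$ be $\F_{q^2}$-linear, i.e.\ that the subfield $\F_{q^2}$ sits in the centre of $K(\mathrm{C})$. This is exactly the content of Zhou's analysis in the symmetric setting~\cite{Zhou}, which uses only $O,I_n\in\mathrm{C}$ and the ring structure of $K(\mathrm{C})$; both features are available here, so I would transport that argument to the Hermitian context and combine it with the Fitting decomposition and the indecomposability hypothesis to conclude.
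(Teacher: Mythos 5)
Your setup coincides with the paper's: both reduce, via Lemma~\ref{lemma:LTZ2}(d) and the specialization $X=I_n$, to showing that every nonzero $N$ in the centralizer of $\{R_X\colon X\in\mathrm{C}\}$ inside $\mathrm{End}(\F_{q^2}^n,+)$ is invertible, and then conclude by finiteness. The divergence, and the problem, lies in how the invariant direct-sum decomposition is produced. The paper takes $V=\ker N$, observes that $V$ is $\mathrm{C}$-invariant, and then uses the fact that every $X\in\mathrm{C}$ is \emph{Hermitian}: in a basis adapted to $V$ the invariance of $V$ kills one off-diagonal block of $X$, and $X=X^*$ kills the other, so the complementary coordinate subspace is automatically $\mathrm{C}$-invariant as well; the hypothesis then forces $V=\{0\}$ or $V=\F_{q^2}^n$. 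Your Fitting decomposition $\F_{q^2}^n=\ker(N^s)\oplus\mathrm{Im}(N^s)$ yields two \emph{nontrivial} invariant summands only when $N$ is neither invertible nor nilpotent. For a nonzero nilpotent $N$ it gives $\ker(N^s)=\F_{q^2}^n$ and $\mathrm{Im}(N^s)=\{0\}$, which contradicts nothing: the hypothesis forbids invariant decompositions, not single invariant subspaces, and your proposed iteration ``on $\ker(N)$ together with a complement produced by the same mechanism'' has no mechanism that actually produces such a complement. This is exactly the point where the Hermitian structure must enter, and your argument never uses it; without closing the nilpotent case the conclusion fails, since a finite unital ring in which every element is invertible or nilpotent need not be a division ring (e.g.\ $\F_q[t]/(t^2)$).

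The second gap is the one you flag yourself: to invoke the hypothesis you need the invariant summands to be $\F_{q^2}$-subspaces, equivalently that $N$ is $\F_{q^2}$-linear, equivalently that $\F_{q^2}$ is central in $K(\mathrm{C})$. Your proposed fix --- that this follows from an analysis ``using only $O,I_n\in\mathrm{C}$ and the ring structure of $K(\mathrm{C})$'' --- cannot be correct as stated: for $\mathrm{C}=\{O,I_n\}$ the relevant centralizer is all of $\mathrm{End}(\F_{q^2}^n,+)$, which contains additive maps that are not $\F_{q^2}$-linear, so the scalars are not central there. Any argument that closes this step must use the indecomposability hypothesis or the Hermitian structure, and you have not supplied one. (The paper is itself terse here --- it treats $\ker N$ as an $\F_{q^2}$-subspace with a basis without comment --- but a citation that provably cannot do the job does not discharge the burden of a blind proof.)
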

\begin{proof}
Since $O\in \mathrm{C}$, by (d) of Lemma \ref{lemma:LTZ2} each element $A$ of $K(\mathrm{C})$ is of Form \eqref{eq:kernel}, i.e.
\[ A=\left(\begin{array}{cccc} N_1 & O \\ O & N_2 \end{array} \right). \]
Because of (a) of Lemma \ref{lemma:LTZ2}, it is enough to show that except for the case in which $N_1$ and $N_2$ are the zero matrix, $N_1$ and $N_2$ are invertible.
Since $A \in K(\mathrm{C})$, then
\[ \{ (\mathbf{x}N_1,\mathbf{x}XN_2)\colon \mathbf{x} \in \F_{q^2}^n \}\subseteq\{ (\mathbf{x},\mathbf{x}X)\colon \mathbf{x} \in \F_{q^2}^n \}, \]
and hence $\mathbf{x}N_1X=\mathbf{x}XN_2$ for each $\mathbf{x}\in \F_{q^2}^n$.
Since $I_n \in \mathrm{C}$, we may choose $X=I_n$ and hence we have $N_1=N_2$, which will be denoted by $N$.
Suppose that $\mathbf{x}N=\mathbf{0}$, then we have also that $\mathbf{x}XN=\mathbf{0}$.
This implies that each $X\in \mathrm{C}$ maps the kernel of $N$ into itself.
Denote by $V$ the kernel of $N$ and by $k$ its dimension.
Choosing a suitable basis of $\F_{q^2}^n$ in such a way that its first $k$ elements are a basis of $V$, then each element of $\mathrm{C}$ may be written as
\[ \left( \begin{array}{cccc} X_1 & O \\ O & X_2 \end{array}\right), \]
with $X_1 \in \mathrm{H}_k(q^2)$ and $X_2\in \mathrm{H}_{n-k}(q^2)$.
Let $U$ and $W$ be the subspaces corresponding to the first $k$ coordinates and the last $n-k$ coordinates respectively.
If $k>0$ this would contradict the hypothesis and hence $N_1$ and $N_2$ are invertible.
\end{proof}

\subsection{The equivalence issue}

In this section we will show that, under some assumptions, the equivalence of two maximum additive hermitian $d$-codes in ${\rm H}_n(q^2)$ coincides with extended equivalence in $\F_{q^2}^{n\times n}$.

\begin{theorem}\label{th:kernel}
Let $d$ be a positive integer and let $\mathrm{C}$ be a maximum additive $d$-code in ${\rm H}_n(q^2)$.
If there exist $a \in \F_{q}^*$ and $P \in \mathrm{GL}(n,q^2)$ such that
\[ I_n \in aP^*X P, \]
then $K(\mathrm{C})$ is isomorphic to a finite field containing $\F_{q^2}$. In particular, if $d<n$ then $K(\mathrm{C})$ is isomorphic to $\F_{q^2}$.
\end{theorem}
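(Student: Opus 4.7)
The plan is: (i) transport $\mathrm{C}$ to a code containing $O$ and $I_n$, (ii) apply Lemma \ref{lemma:kernelfield} to identify $K(\mathrm{C})$ with a field extension of $\F_{q^2}$, and (iii) use the inner distribution formula of Theorem \ref{th:theorem3} to rule out proper extensions when $d<n$.

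For step (i), I set $\mathrm{C}':=aP^{*}\mathrm{C}\,P$. The map $C\mapsto aP^{*}CP$ is of the shape \eqref{eq:theta} (with $\rho=\mathrm{id}$, $B=0$), so $\mathrm{C}'$ is again a maximum additive $d$-code in ${\rm H}_n(q^2)$, with $I_n\in\mathrm{C}'$ by hypothesis and $O\in\mathrm{C}'$ by additivity. The same map is also of the form \eqref{equivrm}, so $\mathrm{C}$ and $\mathrm{C}'$ are equivalent as rank-metric codes in $\F_{q^2}^{n\times n}$ and Lemma \ref{lemma:LTZ2}(b) gives $K(\mathrm{C})\cong K(\mathrm{C}')$; thus I may assume $O,I_n\in\mathrm{C}$.

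For step (ii), I verify the hypothesis of Lemma \ref{lemma:kernelfield} by ruling out any nontrivial decomposition $\F_{q^2}^n=U\oplus W$ with $0<\dim U=k<n$ that is $\mathrm{C}$-invariant on both summands. If such a decomposition existed, every $X\in\mathrm{C}$ would be block-diagonal in an adapted basis, with blocks in ${\rm H}_k(q^2)$ and ${\rm H}_{n-k}(q^2)$. When $d>\min(k,n-k)$ the projection onto the larger block is injective on $\mathrm{C}$ (a non-zero kernel element would have $\F_{q^2}$-rank at most $\min(k,n-k)<d$) and its image is a code of minimum distance at least $d-\min(k,n-k)$ in the smaller Hermitian space, so \eqref{eq:SingletonBoundHermitian} gives $|\mathrm{C}|\leq q^{\max(k,n-k)(n-d+1)}$. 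When $d\leq\min(k,n-k)$, the short exact sequence $0\to\ker\pi_2\to\mathrm{C}\to\pi_2(\mathrm{C})\to 0$ combined with \eqref{eq:SingletonBoundHermitian} applied to the $d$-code $\ker\pi_2\subseteq{\rm H}_k(q^2)$ gives $|\mathrm{C}|\leq q^{k(k-d+1)+(n-k)^2}$. A routine inequality shows that both estimates are strictly below $q^{n(n-d+1)}$ for every $0<k<n$, contradicting maximality. Lemma \ref{lemma:kernelfield} then identifies $K(\mathrm{C})$ with a finite field $\K\supseteq\F_{q^2}$.

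For step (iii), suppose $d<n$ and $\K=\F_{q^{2e}}$ with $e\geq 2$. The commuting action of $\K$ endows $\F_{q^2}^n$ with the structure of an $m$-dimensional $\K$-vector space, $m=n/e$, on which every $X\in\mathrm{C}$ acts $\K$-linearly; consequently the $\F_{q^2}$-rank of any element of $\mathrm{C}$ is a multiple of $e$, and its inner distribution satisfies $A_j=0$ for every $j$ with $e\nmid j$. On the other hand, $\mathrm{C}$ is a maximum $d$-code and (automatically for odd $d$ by Theorem \ref{th:maxdodd}, and under the appropriate design hypothesis in general) an $(n-d)$-design, so Theorem \ref{th:theorem3} determines every $A_j$ explicitly. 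I then evaluate the formula at $j=d$ and at $j=d+1$ and show, via the identity \eqref{eq:propqbin} together with a sign analysis of the negative $q$-binomial coefficients, that both $A_d$ and $A_{d+1}$ are strictly positive. Since $\gcd(d,d+1)=1$, the divisibility requirements $e\mid d$ and $e\mid(d+1)$ cannot hold simultaneously for $e\geq 2$, yielding the contradiction that forces $\K=\F_{q^2}$.

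The main obstacle is the positivity check for $A_{d+1}$: the formula of Theorem \ref{th:theorem3} is an alternating sum of negative $q$-binomial coefficients, and a careful analysis is required to ensure the two contributions do not cancel. Steps (i) and (ii) amount to routine bookkeeping with the equivalence relations and with the Singleton-like bound.
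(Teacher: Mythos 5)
Your steps (i) and (ii) reproduce the paper's argument: reduce to $O,I_n\in\mathrm{C}$ via equivalence and Lemma \ref{lemma:LTZ2}(b), then rule out a nontrivial $\mathrm{C}$-invariant decomposition $\F_{q^2}^n=U\oplus W$ by applying the bound \eqref{eq:SingletonBoundHermitian} to the blocks (the paper splits the cases as $d>\lfloor n/2\rfloor$ versus $d\leq\lfloor n/2\rfloor$, but the counting is the same), so that Lemma \ref{lemma:kernelfield} applies. That part is fine.

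Step (iii) is where there is a genuine gap. Your argument rests on Theorem \ref{th:theorem3}, which requires $\mathrm{C}$ to be an $(n-d)$-design; but Theorem \ref{th:kernel} carries no design hypothesis, and for even $d$ a maximum additive $d$-code need not be a design at all --- the code $\mathrm{M}$ of \eqref{example:Myriam} is a maximum $2$-code that is not even a $1$-design (Theorem \ref{th:nodesiMir}). Your parenthetical ``under the appropriate design hypothesis in general'' is precisely the hypothesis you do not have, so the argument does not prove the stated theorem. Moreover, even where the design hypothesis is available, you never actually establish $A_{d+1}>0$; you flag it as ``the main obstacle'' requiring ``a careful analysis'' of an alternating sum, which is the load-bearing step of your divisibility argument, so as written the proof is incomplete there too. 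The paper's route avoids all of this: if $K(\mathrm{C})\simeq\F_{q^{2\ell}}$ with $\ell>1$, then $\mathrm{C}$ becomes a set of Hermitian matrices of order $n/\ell$ over $\F_{q^{2\ell}}$ with minimum distance $d/\ell$ (your own observation that all ranks are multiples of $\ell$, combined with $A_d>0$, which holds for any maximum $d$-code since a $(d+1)$-code of that size would violate \eqref{eq:SingletonBoundHermitian}, forces $\ell\mid d$), and applying \eqref{eq:SingletonBoundHermitian} over the larger field gives $\ell(n-d)\leq n-d$, hence $\ell=1$ whenever $d<n$. You should replace your step (iii) by this second application of the Singleton-like bound; no inner-distribution computation is needed.
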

\begin{proof}
Clearly, by (b) Lemma \ref{lemma:LTZ2}, we may assume that $I_n \in \mathrm{C}$.
Now, we show that the hypothesis in Lemma \ref{lemma:kernelfield} are satisfied and hence $K(\mathrm{C})$ is a finite field.
Suppose that there exist two subspaces $U$ and $W$ of $\F_{q^2}^n$ such that $\F_{q^2}^n=U\oplus W$ and
 \begin{itemize}
  \item $\{\mathbf{u}X \colon \mathbf{u} \in U, X \in \mathrm{C}\}\subseteq U$ and
  \item $\{\mathbf{w}X \colon \mathbf{w} \in W, X \in \mathrm{C}\}\subseteq W$.
\end{itemize}
Let $k$ be the dimension of $U$ and we may assume that $k\geq\lfloor\frac{n}2\rfloor$ and that a basis for $U$ is given by the first $k$ elements of the standard basis of $\F_{q^2}^n$.
Therefore, each element $M$ of $\mathrm{C}$ can be written as
\[ M=\left(\begin{array}{cccc} M_1 & O \\ O & M_2 \end{array}\right), \]
with $M_1\in {\rm H}_n(q^2)$ and $M_2 \in {\rm H}_{n-k}(q^2)$.
 \begin{itemize}
   \item If $d>\lfloor\frac{n}2\rfloor$, then the set
        \[ \mathrm{C}_1:=\{M_1 \colon M \in \mathrm{C}\} \]
        has size $|\mathrm{C}|=q^{n(n-d+1)}$, otherwise there would be two matrices in $\mathrm{C}$ whose difference has rank less than or equal to $n-k\leq \lfloor\frac{n}2\rfloor$. Its minimum distance $d_1$ is greater than or equal to $d-(n-k)$.
        Bound \eqref{eq:SingletonBoundHermitian} applied to $\mathrm{C}_1$ implies
        \[ q^{n(n-d+1)}=|\mathrm{C}_1|\leq q^{k(k-d_1+1)}\leq q^{k(k-d+(n-k)+1)}. \]
        Thus $k=n$.
   \item Suppose that $d\leq \lfloor\frac{n}2\rfloor$. For each $M_2 \in {\rm H}_{n-k}(q^2)$ let
        \[ \mathrm{C}_{M_2}=\left\{ M_1 \colon \left(\begin{array}{cccc} M_1 & O \\ O & M_2 \end{array}\right) \in \mathrm{C} \right\}. \]
        Its minimum distance $d(\mathrm{C}_{M_2})\geq d$ and by \eqref{eq:SingletonBoundHermitian},
        \[ |\mathrm{C}_{M_2}| \leq q^{k(k-d+1)}. \]
        Therefore,
        \[ |\mathrm{C}|=\sum_{M_2 \in \mathrm{H}_{n-k}(q^2)} |\mathrm{C}_{M_2}|\leq q^{(n-k)(n-k+1)}\cdot q^{k(k-d+1)}, \]
        and so
        \[ n(n-d+1)\leq (n-k)^2+(n-k)+k(k-d+1)\leq (n-k)^2+(n-k)+k(n-d+1).\]
        If $k\neq n$ then $d \geq k$, which is not possible.
        Hence $k=n$.
 \end{itemize}
In both the aforemetioned cases we have $k=n$ and therefore we can apply Lemma \ref{lemma:kernelfield} and (c) of Lemma \ref{lemma:LTZ2} to get the first part of the assertion.
Now, suppose that $d<n$ and that $K(\mathrm{C})\simeq \F_{q^{2\ell}}$ contains properly a field isomorphic to $\F_{q^2}$.
Then $\mathrm{C}$ can be seen as subset of Hermitian matrices of order $n/\ell$ over $\mathbb{F}_{q^{2\ell}}$ with minimum distance $d'=d/\ell$.
By \eqref{eq:SingletonBoundHermitian} we have that
\[ |\mathrm{C}|=q^{n(n-d+1)}\leq q^{\frac{n}{\ell}\left(\frac{n}{\ell}-d'+1\right)}, \]
from which we get $\ell=1$ and also the second part of the statement follows.
\end{proof}

\begin{lemma}\label{lemma:invmat}
If $\mathrm{C}$ is a Hermitian maximum additive $d$-code and an $(n-d)$-design with $d<n$.
Then there is at least one invertible matrix in $\mathrm{C}$.
\end{lemma}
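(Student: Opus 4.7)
The plan is to show directly that the number $A_n$ of rank-$n$ (invertible) matrices in $\mathrm{C}$ is strictly positive, by evaluating the explicit inner distribution formula of Theorem \ref{th:theorem3}. Since $\mathrm{C}$ is a $d$-code that is also an $(n-d)$-design, the theorem applies; plugging in $i = 0$ together with the maximality relation $|\mathrm{C}| = q^{n(n-d+1)}$ yields
\begin{equation*}
A_n = \sum_{j=0}^{n-d} (-1)^j (-q)^{\binom{j}{2}} \qbin{n}{j}{}\left( q^{n(n-d+1-j)}(-1)^{(n+1)j} - 1 \right).
\end{equation*}
The term at $j = 0$ equals $q^{n(n-d+1)} - 1$, and since the hypothesis $d < n$ forces $n-d+1 \geq 2$, this contribution is at least $q^{2n} - 1$. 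The goal is to show it dominates the rest of the sum.

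To that end, I would bound each summand with $1 \leq j \leq n-d$ by estimating $|\qbin{n}{j}{}|$ through $|(-q)^k - 1| \leq q^k + 1$ in the numerator and $|(-q)^k - 1| \geq q^k - 1$ in the denominator, obtaining an inequality of the form $|\qbin{n}{j}{}| \leq c(q)\, q^{j(n-j)}$ with $c(q)$ depending only on $q$. Combined with the prefactor $q^{\binom{j}{2}}$ and the trivial bound $q^{n(n-d+1-j)} + 1 \leq 2 q^{n(n-d+1-j)}$, each summand for $j \geq 1$ has magnitude at most a constant times $q^{n(n-d+1) - j(j+1)/2}$. The geometric-like tail $\sum_{j \geq 1} q^{-j(j+1)/2}$ then produces a total of order $q^{n(n-d+1) - 1}$, strictly smaller than the $j = 0$ contribution; hence $A_n > 0$ and $\mathrm{C}$ contains an invertible matrix.

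The main obstacle is that a naive triangle-inequality estimate yields constants $c(q)$ that are uncomfortable for small $q$, most notably $q = 2$, where pessimistic bounds on $|\qbin{n}{j}{}|$ can exceed the target. A cleaner route, which I would likely pursue in a polished write-up, is to exploit the negative $q$-binomial theorem
\begin{equation*}
\sum_{j=0}^n (-q)^{\binom{j}{2}} \qbin{n}{j}{} x^j = \prod_{i=0}^{n-1}\bigl(1 + x(-q)^i\bigr)
\end{equation*}
evaluated at $x = (-q)^{-n}$, together with the truncation identity \eqref{eq:propqbin} applied at $(k,i) = (n,0)$, to rewrite $A_n$ as a product of manifestly nonzero factors minus an explicitly controllable tail. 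This makes the positivity of $A_n$ transparent for every value of $q$ and $d < n$, avoiding the delicate numerics of the direct triangle-inequality argument.
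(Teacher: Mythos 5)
You set the computation up exactly as the paper does: apply Theorem \ref{th:theorem3} with $i=0$, substitute $|\mathrm{C}|=q^{n(n-d+1)}$, and try to show $A_n\neq 0$. The gap is entirely in how you propose to conclude. Your primary route is an archimedean domination estimate, and you yourself concede that the constants $c(q)$ in $|\qbin{n}{j}{}|\leq c(q)\,q^{j(n-j)}$ defeat it for $q=2$; an argument that is admitted to fail for the smallest field is not a proof. Your fallback via the negative $q$-binomial theorem at $x=(-q)^{-n}$ points in a workable direction (the completed sum becomes $q^{n(n-d+1)}\prod_{m=1}^{n}(1+(-q)^{-m})$, which for $q=2$ is about $0.57\,q^{n(n-d+1)}$), but the assertion that the discarded tail over $j\in\{n-d+1,\dots,n\}$ is ``explicitly controllable'' is precisely the step you never carry out, and it is not transparent: the same triangle-inequality bounds give a tail of order $c(q)\sum_{j\geq 2}q^{-j(j+1)/2}\cdot q^{n(n-d+1)}$, which for $q=2$ is comparable in size (roughly $0.46\,q^{n(n-d+1)}$ with careful constants) to the main product. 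The inequality may survive, but only after the delicate numerics you were trying to avoid; as written, the decisive estimate is asserted rather than proved.

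The paper sidesteps all of this by arguing $q$-adically instead of over $\mathbb{R}$: it reduces the expression for $A_n$ modulo a power of $q$ and uses \eqref{eq:propqbin} to trade the truncated sum for a complete one plus a correction, so that no upper bound on $|\qbin{n}{j}{}|$ is ever needed — only the fact that $\qbin{n}{j}{}\equiv 1\pmod q$ is a $q$-adic unit. That is also the clean way to close your gap: by \eqref{eq:propqbin} one has $-\sum_{j=0}^{n-d}(-1)^{j}(-q)^{\binom{j}{2}}\qbin{n}{j}{}=\sum_{j=n-d+1}^{n}(-1)^{j}(-q)^{\binom{j}{2}}\qbin{n}{j}{}$, whose $j=n-d+1$ term has exact $q$-adic valuation $\binom{n-d+1}{2}$, while every other contribution to $A_n$ has strictly larger valuation (the terms carrying $q^{n(n-d+1-j)}$ have valuation $n(n-d+1-j)+\binom{j}{2}\geq n+\binom{n-d}{2}>\binom{n-d+1}{2}$, and the remaining correction terms have valuation $\binom{j}{2}\geq\binom{n-d+2}{2}$). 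Hence $A_n\not\equiv 0 \pmod{q^{\binom{n-d+1}{2}+1}}$. One caveat when you consult the paper: its final line reads ``$A_n\equiv -1\pmod{q^{n-d}}$ by \eqref{eq:propqbin}'', but with $k=n$, $i=0$ that identity yields $\delta_{n,0}=0$, so the congruence must be run at the valuation $\binom{n-d+1}{2}$ as above rather than modulo $q^{n-d}$. I recommend replacing your analytic conclusion with this valuation argument.
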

\begin{proof}
If $d=1$, then $\mathrm{C}={\rm H_n(q^2)}$ and the assertion holds.
So assume that $1<d<n$: our aim is to prove that $A_n \neq 0$.
By Theorem \ref{th:theorem3}, we have that
\[ A_{n-i}=\sum_{j=i}^{n-d} (-1)^{j-i}(-q)^{\binom{j-i}{2}} \qbin{j}{i}{}\qbin{n}{j}{}\left( \frac{|\mathrm{C}|}{q^{nj}}(-1)^{(n+1)j}-1 \right), \]
for each $i \in \{0,1,\ldots,n-1\}$.
For $i=0$, we get
\begin{equation}\label{eq:An}
A_{n}=\sum_{j=0}^{n-d} (-1)^{j}(-q)^{\binom{j}{2}} \qbin{j}{0}{}\qbin{n}{j}{}\left( \frac{|\mathrm{C}|}{q^{nj}}(-1)^{(n+1)j}-1 \right).
\end{equation}
Recalling that $|\mathrm{C}|=q^{n(n-d+1)}$, the above formula can be written as follows
\[A_{n}=\sum_{j=0}^{n-d} (-1)^{j}(-q)^{\binom{j}{2}} \qbin{n}{j}{}\left(q^{n(n-d-j+1)}-1 \right)\]
\[\equiv -\sum_{j=0}^{n-d} (-1)^{j}(-q)^{\binom{j}{2}} \qbin{n}{j}{} \pmod{q^{n-d}} \]
\[\equiv -\sum_{j=0}^{n} (-1)^{j}(-q)^{\binom{j}{2}} \qbin{n}{j}{} +\sum_{j=n-d+1}^{n} (-1)^{j}(-q)^{\binom{j}{2}} \qbin{n}{j}{} \pmod{q^{n-d}} \]
\[ \equiv -\sum_{j=0}^{n} (-1)^{j}(-q)^{\binom{j}{2}} \qbin{n}{j}{} \pmod{q^{n-d}}.  \]
Therefore, by Equation \eqref{eq:propqbin} we have $A_n \equiv -1 \pmod{q^{n-d}}$, so that $A_n\ne 0$.
\end{proof}

We are ready to prove the main result of this section.

\begin{theorem}\label{th:equiv}
If $\mathrm{C}_1$ and $\mathrm{C}_2$ are two maximum additive Hermitian $d$-codes and $(n-d)$-designs with $d<n$.
Then they are equivalent in ${\rm H}_n(q^2)$ if and only if they are extended equivalent.
\end{theorem}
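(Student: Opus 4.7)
One direction of the theorem is immediate: any $\Theta$ of form \eqref{eq:theta} with $a \in \F_q^*$ and $A \in \mathrm{GL}(n,q^2)$ is a special instance of \eqref{equivrm} with $B := aA^*$, so equivalence in $\mathrm{H}_n(q^2)$ already implies extended equivalence. For the converse I would assume $\mathrm{C}_2 = \Psi(\mathrm{C}_1)$ with $\Psi$ of type \eqref{equivrm} or its transpose variant. Since both codes are additive, $R = \Psi(0) \in \mathrm{C}_2 \subseteq \mathrm{H}_n(q^2)$, and translating by $-R$ is itself an equivalence in $\mathrm{H}_n(q^2)$, so I may first reduce to $R = 0$.

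Treating the non-transpose case $\Psi(C) = A C^\sigma B$ first, the fact that $A C^\sigma B$ is Hermitian for every $C \in \mathrm{C}_1$ gives $A C^\sigma B = (A C^\sigma B)^* = B^* C^\sigma A^*$, which rewrites as $X D = D X^*$ for every $D \in \mathrm{C}_1^\sigma$ once I set $X := (B^*)^{-1} A$. Now $\mathrm{C}_1^\sigma$ is still a maximum additive Hermitian $d$-code and $(n-d)$-design (since $\sigma$ preserves rank, additivity and Hermitian-ness), so by Lemma \ref{lemma:invmat} it contains an invertible matrix $M$; moreover, any non-degenerate Hermitian form over $\F_{q^2}$ is equivalent to the identity, so $P^* M P = I_n$ for some $P \in \mathrm{GL}(n,q^2)$, and Theorem \ref{th:kernel} yields $K(\mathrm{C}_1^\sigma) \simeq \F_{q^2}$. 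The block-diagonal endomorphism $\mu \colon (\mathbf{x},\mathbf{y}) \mapsto (\mathbf{x} X, \mathbf{y} X^*)$ of $\F_{q^2}^{2n}$ satisfies $\mu(S(D)) \subseteq S(D)$ for every $D \in \mathrm{C}_1^\sigma \cup \{\infty\}$, precisely because $X D = D X^*$, so $\mu \in K(\mathrm{C}_1^\sigma)$. Since $K(\mathrm{C}_1^\sigma)$ already contains the $q^2$ scalar maps $\lambda I_{2n}$ from Lemma \ref{lemma:LTZ2}(c), it must coincide with them, forcing $X = \lambda I_n = X^*$; then $\bar\lambda = \lambda$ gives $\lambda \in \F_q^*$ (nonzero since $X$ is invertible). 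Consequently $A = \lambda B^*$ and $\Psi(C) = \lambda B^* C^\sigma B = a A' C^\sigma (A')^*$ with $a = \lambda$ and $A' = B^*$, which is of form \eqref{eq:theta}.

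In the transpose case $\Psi(C) = A (C^\sigma)^T B$, the identity $C^T = \overline{C}$ for Hermitian $C$, together with the commutativity of $\mathrm{Aut}(\F_{q^2})$, gives $(C^\sigma)^T = \overline{C^\sigma} = C^\rho$ with $\rho := \bar{\cdot} \circ \sigma \in \mathrm{Aut}(\F_{q^2})$, so $\Psi(C) = A C^\rho B$ on $\mathrm{C}_1$ and the previous argument applies verbatim with $\rho$ in place of $\sigma$. The delicate step of the whole proof is verifying that the pair $(X, X^*)$ really defines an element of $K(\mathrm{C}_1^\sigma)$; once this is in place, the weight of the argument is carried by Theorem \ref{th:kernel} and the resulting count $|K(\mathrm{C}_1^\sigma)| = q^2$, which immediately pin $X$ down to a scalar in $\F_q^*$ and deliver the equivalence in $\mathrm{H}_n(q^2)$.
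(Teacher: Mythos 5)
Your proof is correct and follows essentially the same route as the paper's: reduce to $R=0$, turn the Hermitian condition into the intertwining relation $XD=DX^*$, note that the block-diagonal map $(\mathbf{x},\mathbf{y})\mapsto(\mathbf{x}X,\mathbf{y}X^*)$ lies in the kernel, and use Lemma \ref{lemma:invmat} together with Theorem \ref{th:kernel} to force $X$ to be a scalar in $\F_q^*$. Your explicit handling of the transpose variant of \eqref{equivrm} fills in a detail the paper's proof leaves implicit, but it does not change the argument.
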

\begin{proof}
Clearly, if $\mathrm{C}_1$ and $\mathrm{C}_2$ are equivalent in ${\rm H}_n(q^2)$ then they are also extended equivalent.
Now assume that $\mathrm{C}_1$ and $\mathrm{C}_2$ are extended equivalent, i.e. there exist two invertible matrices $A,B \in \mathrm{GL}(n,q^2)$, $\rho \in \mathrm{Aut}(\F_{q^2})$ and $R \in \F_{q^2}^{n\times n}$ such that
\[ \mathrm{C}_1=A \mathrm{C}_2^\rho B +R. \]
Since $\mathrm{C}_1$ and $\mathrm{C}_2$ are additive, we may assume that $R=O$, i.e. $\mathrm{C}_1=A \mathrm{C}_2^\rho B$.
We are going to prove that $A=zB^*$ for some $z \in \F_{q}^*$.
So,
\[ \mathrm{C}_2=A \mathrm{C}_1^\sigma B=(A(B^*)^{-1})B^*\mathrm{C}_1^\sigma B=M\mathrm{C}_3, \]
where $M=A(B^*)^{-1}$ and $\mathrm{C}_3=B^*\C_1^\sigma B \subseteq {\rm H}_n(q^2)$.
As a consequence, we have that $MX \in {\rm H}_n(q^2)$ for each $X \in \mathrm{C}_3$, i.e.
\[ MX=(MX)^*=XM^* \]
for all $X \in \mathrm{C}_3$.
Hence the matrix
\[ \left( \begin{array}{cccc} M & O \\ O & M^* \end{array} \right) \in K(\mathrm{C}_3). \]
By Lemma \ref{lemma:invmat}, there exists in $\mathrm{C}_3$ an invertible matrix, which implies the existence of $a \in \F_{q}$ and $D \in \mathrm{GL}(n,q)$ such that $I_n\in a D^*\mathrm{C}_3 D$.
Now, by Theorem \ref{th:kernel} we have that $K(\mathrm{C}_3)=\F_{q^2}$ and hence $M=z I_n$ for some $z \in \F_{q^2}^*$. By (c) of Lemma \ref{lemma:LTZ2}, we have
\[ K(\mathrm{C}_3)=\{\gamma I_{n+n} \colon \gamma \in \F_{q^2}\}, \]
and as $\left( \begin{array}{cccc} M & O \\ O & M^* \end{array} \right) \in K(\mathrm{C}_3)$, it follows that $M=M^*=z I_n$, with $z \in \F_{q}^*$,
i.e. $A=zB^*$.
\end{proof}

As a consequence of Theorem \ref{th:maxdodd} we get the following.

\begin{corollary}\label{cor:equiv}
If $\mathrm{C}_1$ and $\mathrm{C}_2$ are two Hermitian maximum additive $d$-codes with $d$ odd, $d<n$.
Then they are equivalent in ${\rm H}_n(q^2)$ if and only if they are extended equivalent.
\end{corollary}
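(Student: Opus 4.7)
The plan is to reduce this directly to Theorem \ref{th:equiv} by showing that the hypothesis of being an $(n-d)$-design is automatic in the odd-$d$ case. The key observation is simply that a $t$-design is a fortiori a $t'$-design for every $t' \le t$: indeed, the definition requires $A_1' = \cdots = A_t' = 0$, so if this vanishing holds up to index $t$ it certainly holds up to any smaller index.

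First I would invoke Theorem \ref{th:maxdodd}: since $\mathrm{C}_1$ and $\mathrm{C}_2$ are Hermitian maximum additive $d$-codes with $d$ odd, both are $(n-d+1)$-designs in ${\rm H}_n(q^2)$. By the monotonicity remark above, both are in particular $(n-d)$-designs. Together with the assumption $d<n$, this places $\mathrm{C}_1$ and $\mathrm{C}_2$ exactly inside the hypotheses of Theorem \ref{th:equiv}.

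Then I would simply apply Theorem \ref{th:equiv}, which states that two maximum additive Hermitian $d$-codes that are $(n-d)$-designs (with $d<n$) are equivalent in ${\rm H}_n(q^2)$ if and only if they are extended equivalent. This gives the conclusion of the corollary immediately. There is no genuine obstacle here, since all the work has already been done: the equivalence in ${\rm H}_n(q^2)$ trivially implies extended equivalence, while the converse is the substance of Theorem \ref{th:equiv}. The only thing to check is the translation between the two design indices appearing in Theorems \ref{th:maxdodd} and \ref{th:equiv}, which as noted is immediate from the definition.
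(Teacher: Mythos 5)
Your proposal is correct and follows exactly the route the paper intends: the corollary is stated ``as a consequence of Theorem \ref{th:maxdodd}'', meaning precisely that an odd-$d$ maximum additive code is an $(n-d+1)$-design, hence an $(n-d)$-design by the definition $A_1'=\cdots=A_t'=0$, so Theorem \ref{th:equiv} applies. Nothing is missing.
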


\section{Idealisers are not distinguishers in ${\rm H}_n(q^2)$}\label{sec:ideal}

In the classical rank metric context, to establish whether two codes are equivalent or not could be quite difficult.
One of the strongest tool for such a issue is given by the automorphism groups of such codes, which usually is very hard to determine.
In some cases it is enough to study some subgroups of the automorphism group which are invariant under the equivalence, which are easier to calculate, such as the \emph{idealisers} introduced in \cite{LN2016} and deeply investigated in \cite{LTZ2}.

Let $\mathrm{C}$ be an additive rank metric code in $\F_{q}^{n\times n}$, its \emph{left idealiser} $I_\ell(\mathrm{C})$ is defined as
\[ I_\ell(\mathrm{C})=\{Z \in \F_{q}^{n\times n} \colon ZX \in \mathrm{C} \,\,\text{for all}\,\, X \in \mathrm{C}\}  \]
and its \emph{right idealiser} $I_r(\mathrm{C})$ is defined as
\[ I_r(\mathrm{C})=\{Z \in \F_{q}^{n\times n} \colon XZ \in \mathrm{C} \,\,\text{for all}\,\, X \in \mathrm{C}\}.  \]

Idealisers have been used to \emph{distinguish} examples of MRD-codes, see \cite{BZZ,CMPZ,CsMPZh,CsMZ2018,LLTZ,LTZ2,MMZ,SchmidtZhou,ZZ}.
In the next we prove that for maximum additive Hermitian $d$-codes left and right idealisers are isomorphic to $\F_{q}$, i.e. they cannot be used as distinguishers in the Hermitian setting.

\begin{theorem}
Let $\mathrm{C}$ be a maximum Hermitian additive $d$-code and a $(n-d)$-design with $d<n$.
Then $I_\ell(\mathrm{C})$ and $I_r(\mathrm{C})$ are both isomorphic to $\F_{q}$.
\end{theorem}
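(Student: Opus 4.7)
The plan is to embed both idealisers into the kernel $K(\mathrm{C})$ via a Hermitian-conjugation trick and then apply Theorem \ref{th:kernel} to constrain any element of the idealiser to a scalar matrix with scalar in $\F_q$.

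I would first reduce to the case $I_n \in \mathrm{C}$. Lemma \ref{lemma:invmat} supplies an invertible Hermitian matrix $X_0 \in \mathrm{C}$, and since all non-degenerate Hermitian forms over $\F_{q^2}$ are equivalent, one can find $P \in \mathrm{GL}(n,q^2)$ with $P^* X_0 P = I_n$. Replacing $\mathrm{C}$ by the equivalent code $P^* \mathrm{C} P$, which remains a maximum additive Hermitian $d$-code and $(n-d)$-design with idealisers conjugate to those of the original, I may assume $I_n \in \mathrm{C}$. Then Theorem \ref{th:kernel} together with part (c) of Lemma \ref{lemma:LTZ2} identifies $K(\mathrm{C})$ with the scalar matrices $\{\gamma I_{2n} \colon \gamma \in \F_{q^2}\}$.

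Next, I would exploit Hermicity. Given $Z \in I_\ell(\mathrm{C})$, the inclusion $ZX \in \mathrm{C} \subseteq \mathrm{H}_n(q^2)$ forces $ZX = (ZX)^* = X Z^*$ for every $X \in \mathrm{C}$. This is precisely what is needed for the block-diagonal additive map
$$\mu = \begin{pmatrix} Z & O \\ O & Z^* \end{pmatrix}$$
to lie in $K(\mathrm{C})$: it obviously preserves $S(\infty)$, and sends $(\mathbf{x}, \mathbf{x}X) \in S(X)$ to $(\mathbf{x}Z, \mathbf{x}XZ^*) = (\mathbf{x}Z, (\mathbf{x}Z) X) \in S(X)$. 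Hence $\mu = \gamma I_{2n}$ for some $\gamma \in \F_{q^2}$, which gives $Z = \gamma I_n$ and $Z^* = \gamma I_n$. Since $(\gamma I_n)^* = \bar\gamma I_n$, I read off $\gamma \in \F_q$, so $I_\ell(\mathrm{C}) \subseteq \F_q I_n$; the reverse inclusion is immediate from the $\F_q$-linearity of $\mathrm{C}$, yielding $I_\ell(\mathrm{C}) \simeq \F_q$. For $I_r(\mathrm{C})$ the argument is mirrored: $XZ \in \mathrm{C}$ now yields $XZ = Z^* X$, and the analogous embedding with the two diagonal blocks swapped places $I_r(\mathrm{C})$ inside $K(\mathrm{C})$ with the same conclusion.

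The main obstacle, in my view, is the clean construction of the embedding $I_\ell(\mathrm{C}) \hookrightarrow K(\mathrm{C})$: the identity $ZX = X Z^*$ that lets $\mu$ preserve each $S(X)$ only materialises because $\mathrm{C}$ sits inside the Hermitian matrices, and its effective use hinges on the normalisation $I_n \in \mathrm{C}$ secured in the first step via Lemma \ref{lemma:invmat} and the classification of Hermitian forms. Once this embedding is in place, Theorem \ref{th:kernel} plus the conjugation constraint $\gamma = \bar\gamma$ finishes the proof essentially automatically.
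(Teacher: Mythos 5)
Your proof is correct and follows essentially the same route as the paper: both embed an idealiser element $Z$ into the kernel $K(\mathrm{C})$ via the Hermitian identity $ZX=(ZX)^*=XZ^*$ and the block matrix $\mathrm{diag}(Z,Z^*)$, use Lemma \ref{lemma:invmat} to meet the hypothesis of Theorem \ref{th:kernel}, and conclude from $K(\mathrm{C})\simeq \F_{q^2}$ that $Z=Z^*=\gamma I_n$ with $\gamma\in\F_{q}$. The only differences are cosmetic (you normalise $I_n\in\mathrm{C}$ explicitly before invoking Theorem \ref{th:kernel}), apart from the small caveat that your reverse inclusion $\F_{q}I_n\subseteq I_\ell(\mathrm{C})$ appeals to $\F_{q}$-linearity, which the theorem only assumes in the form of additivity --- a point the paper itself leaves implicit.
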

\begin{proof}
Let us consider the left idealiser case and let $M \in I_\ell(\mathrm{C})$.
We have that $MX \in {\rm H}_n(q^2)$ for each $X \in \mathrm{C}$, i.e.
\[ MX=(MX)^*=XM^* \]
for all $X \in \mathrm{C}$.
Hence the matrix
\[ \left( \begin{array}{cccc} M & O \\ O & M^* \end{array} \right) \in K(\mathrm{C}), \]
and as in the proof of Theorem \ref{th:equiv}, we get that $M=a I_n$ for some $a \in \F_{q}$.
Similar arguments imply the same result for the right idealiser.
\end{proof}

As a consequence of Theorem \ref{th:maxdodd} we get the following.

\begin{corollary}
If $\mathrm{C}$ is a maximum Hermitian additive $d$-code with $d$ odd, $d<n$.
Then $I_\ell(\mathrm{C})$ and $I_r(\mathrm{C})$ are both isomorphic to $\F_{q}$.
\end{corollary}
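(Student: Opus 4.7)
The corollary is an immediate consequence of the preceding Theorem together with Theorem \ref{th:maxdodd}. My plan is therefore extremely short: I would reduce the odd-$d$ hypothesis to the design hypothesis required by the previous Theorem, and then simply quote it.

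More concretely, first I would invoke Theorem \ref{th:maxdodd}: since $\mathrm{C}$ is a maximum additive $d$-code with $d$ odd, it must be an $(n-d+1)$-design. Next I would observe that being an $(n-d+1)$-design implies being an $(n-d)$-design: indeed, by the definition given in Section~2, a code $\mathrm{C}$ is a $t$-design precisely when $A_1'=\cdots=A_t'=0$, so the condition for $t=n-d+1$ trivially entails the condition for $t=n-d$. Hence $\mathrm{C}$ satisfies the hypotheses of the previous Theorem in this section, and that theorem directly yields $I_\ell(\mathrm{C})\simeq I_r(\mathrm{C})\simeq \F_q$.

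There is essentially no obstacle, since the work has already been done in Theorem \ref{th:maxdodd} (which characterises maximum additive $d$-codes with odd $d$ as $(n-d+1)$-designs) and in the preceding Theorem of this section (which handles the case of $(n-d)$-designs). The only mild point worth stating explicitly in the written proof is the monotonicity of the design property in the parameter $t$, so that the stronger design condition coming from Theorem \ref{th:maxdodd} implies the slightly weaker one required in the previous Theorem.
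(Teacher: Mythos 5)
Your proposal is correct and is exactly the argument the paper intends: the corollary is stated ``as a consequence of Theorem \ref{th:maxdodd}'' with no written proof, the implicit reasoning being precisely that odd $d$ forces $\mathrm{C}$ to be an $(n-d+1)$-design, hence an $(n-d)$-design, so the preceding theorem applies. Your explicit remark on the monotonicity of the design property in $t$ is a sensible detail to spell out.
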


\section{The $q$-polynomial setting and some combinatorial properties}\label{sec:setting}

We briefly introduce the Hermitian setting from a polynomial point of view.
Let $n \in \mathbb{Z}^+$ be a positive integer, and let $q$ be a prime power.
We denote by $\mathcal{L}_{n,q}$ the quotient $\F_q$-algebra of the algebra of linearized polynomials over $\F_{q^n}$ with respect to $(x-x^{q^n})$, i.e.
\[\mathcal{L}_{n,q}=\left\{ \sum_{i=0}^{n-1} a_i x^{q^i} \colon a_i \in \F_{q^n} \right\}. \]

It is well known that there is a {\it one-to-one} correspondence between the elements of $\mathcal{L}_{n,q}$ and the $\mathbb{F}_q$-linear transformation of $\F_{q^n}$ (represented as matrices).  Using this fact and following the point of view expressed in \cite{LLTZ}, we may identify the set ${\rm H}_n(q^2)$ of Hermitian matrices of order $n$ over $\F_{q^{2}}$ with the set of $q^2$-polynomials
\[{\cH}_n(q^2)=\left\{ \sum_{i=0}^{n-1} c_i x^{q^{2i}} \colon c_{n-i+1}=c_i^{q^{2n-2i+1}}, \,\text{ with } \, i \in \{0,\ldots,n-1\} \right\}\subseteq \mathcal{L}_{n,q^2}, \]
where the indices are taken modulo $n$. We underline here that if $n$ is odd then $c_{(n+1)/2} \in \F_{q^n}$. Moreover, the rank of a Hermitian form equals the dimension of the image of the map $f  \, : \, \F_{q^{2n}} \rightarrow \F_{q^{2n}}$, where $f \in {\cH}_n(q^2)$.

Also, we may consider the maps that preserve the rank distance in ${\rm H}_{n}(q^2)$ represented as polynomials.
In order to do this, consider the non-degenerate symmetric bilinear form of $\F_{q^{2n}}$ over $\F_{q^2}$ defined by
\[ \la x,y\ra= \Tr_{q^{2n}/q^2}(xy), \]
for each $x,y \in \F_{q^{2n}}$, where $\displaystyle \Tr_{q^{2n}/q^2}(x)=\sum_{i=0}^{n-1}x^{q^{2i}}$. Then the \emph{adjoint} $f^\top$ of the linearized polynomial $\displaystyle f(x)=\sum_{i=0}^{n-1} a_ix^{q^{2i}} \in \mathcal{L}_{n,q^2}$ with respect to the bilinear form $\la,\ra$ is
\[ f^\top(x)=\sum_{i=0}^{n-1} a_i^{q^{n-2i}}x^{q^{n-2i}}, \]
i.e.
\[ \Tr_{q^{2n}/q^2}(xf(y))=\Tr_{q^{2n}/q^2}(y{f}^\top(x)), \]
for any $x,y \in \F_{q^{2n}}$.

Then, one can easily verify that maps preserving the rank distance in ${\rm H}_{n}(q^2)$, are of the form
\begin{equation}\label{eq:def_equivalence_hermitian_setting}
	\Theta_{a,g,\rho,r_0}(f) = ag \circ f^{\rho} \circ g^{{\top}q^{2n-1}}(x) + r_0(x),
\end{equation}
for given $a \in \F^*_{q}$, $\rho \in \Aut(\F_{q^2})$, $g(x)=\sum_{i=0}^{n-1}g_ix^{q^i}$ a permutation $q^2$-polynomial over $\F_{q^{2n}}$, $r_0 \in {\cH}_{n}(q^2)$ and $g^{{\top}q^{2n-1}}(x)=\sum_{i=0}^{n-1} g_i^{q^{n-2i-1}}x^{q^{n-2i}}$.

In this context, if $\cC_1$ and $\cC_2$ are two subsets of ${\cH}_{n}(q^2)$ and there exists a map $\Theta_{a,g,\rho,r_0}$ defined as in Equation \eqref{eq:def_equivalence_hermitian_setting} for certain $a$, $g$, $\rho$ and $r_0$ such that
\[ \C_2 :=\{\Theta_{a,g,\rho,r_0}(f): f\in \C_1 \},\]
then we say that $\C_1$ and $\C_2$ are \emph{equivalent} in ${\cH}_{n}(q^2)$.

As we are considering $d$-codes using linearized polynomials, we can interpret the dual code $\cC^{\perp}$ of $\cC$ in the following way:

\[\cC^{\perp}=\{ f \in \cH_n(q^2) \, \colon \, b(f,g)=0, \,\, \forall \,\, g \in \cC\},\] where

\begin{equation}\label{formula:q-polybilinearform} b(f,g)=\mathrm{Tr}_{q^{2n}/q^2}\left(\sum_{i=0}^{n-1} a_ib_i\right),\end{equation}
whenever $f(x)=\sum_{i=0}^{n-1}a_i x^{q^{2i}}$ and $g=\sum_{i=0}^{n-1}b_i x^{q^{2i}} \in \cH_{n}(q^2)$.

\begin{remark}
As noted in \cite[Section 2]{Sheekey2016} (see also \cite{LMPTSymp}), there exists an $\mathbb{F}_{q^2}$-basis of $\mathbb{F}_{q^2}^n$ such that $\mathrm{H}_n(q^2)$ and $\mathcal{H}_n(q^2)$ are isomorphic (denote by $\varphi$ such an isomorphism) and with the property that $\mathrm{tr}(A^*B)=b(\varphi(A),\varphi(B))$.
Now, recalling that $\langle A, B\rangle =1$ if and only if $b(\varphi(A),\varphi(B))=0$ (as $\chi$ is a non-trivial character of $\mathbb{F}_{q}$), we have that
\[ \varphi(\mathrm{C}^\perp)=\mathcal{C}^\perp. \]
This allows us to switch between the two models.
\end{remark}

 \medskip

\noindent Here below we give a description of the known examples of maximum Hermitian $d$-codes in a polynomial fashion, \cite[Theorems 4 and 5]{Schmidt2018} (see also \cite[Section 2.2]{LLTZ}). More precisely, let $s$ be an odd positive integer with $\gcd(s,n)=1$.
If $n$ and $d$ are integers with opposite parity such that $1\leq d \leq n-1$, then the set

\begin{equation}\label{ex:H}
\mathcal{H}_{n,d,s}=\left\{ \sum_{j=1}^{\frac{n-d+1}2} \left( (b_jx)^{q^{2s(n-j+1)}}+b_j^{q^s} x^{q^{2sj}} \right)  \colon b_1,\ldots, b_{\frac{n-d+1}2}\in \F_{q^{2n}} \right\},
\end{equation}

\noindent is a maximum $\F_q$-linear Hermitian $d$-code.

In addition, if $n$ and $d$ are both odd integers, then the set

\begin{equation}\label{ex:E}
\mathcal{E}_{n,d,s}=\left\{ (b_0x)^{q^{s(n+1)}}+\sum_{j=1}^{\frac{n-d}2} \left( (b_jx)^{q^{s(n+2j+1)}}+b_j^{q^s} x^{q^{s(n-2j+1)}} \right) \colon b_0\in \F_{q^n}, b_1,\ldots, b_{\frac{n-d+1}2}\in \F_{q^{2n}} \right\},
\end{equation}
is a maximum $\F_q$-linear Hermitian $d$-code.

We present some combinatorial properties of these examples. In order to do this, let us recall the following result of Gow and Quinlan.

\begin{theorem}{(\cite[Theorem 5]{GQ2009} and \cite[Theorem 10]{GQ2009x})}\label{GQ}
The dimension of the kernel of a $q$-polynomial $f(x)=a_0x+a_1x^{q^s}+\cdots+a_{k-1}x^{q^{s(k-1)}}+a_k x^{q^{sk}}\in \mathcal{L}_{n,q}$ with $\gcd(s,n)=1$ is at most $k$.
In particular, if the dimension of the kernel of $f(x)$ is $k$, then $\N_{q^n/q}(a_0)=(-1)^{nk}\N_{q^n/q}(a_k)$, where $\N_{q^n/q}(a)=a^{\frac{q^n-1}{q-1}}$ for $a \in \F_{q^n}$.
\end{theorem}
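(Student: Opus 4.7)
The plan is to attack the two parts of the statement separately. For the bound $\dim\ker f\leq k$, I would run a Moore/Wronskian argument. Suppose for contradiction that $\ker f$ contains $k+1$ elements $v_1,\ldots,v_{k+1}$ that are $\F_q$-linearly independent, and consider the system $f(v_i)=\sum_{j=0}^{k}a_j v_i^{q^{sj}}=0$. This is a square homogeneous system with coefficient matrix $M=(v_i^{q^{sj}})_{1\leq i\leq k+1,\,0\leq j\leq k}$. Since $\gcd(s,n)=1$, the automorphism $x\mapsto x^{q^s}$ generates $\Gal(\F_{q^n}/\F_q)$, and a standard generalisation of the Moore matrix theorem guarantees that $\det M\neq 0$ precisely because the $v_i$'s are $\F_q$-independent. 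It follows that $a_0=\cdots=a_k=0$, contradicting $a_k\neq 0$.

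\medskip

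For the norm identity in the second part, my approach is to factor $f$ into linear pieces in the Ore polynomial ring $R=\F_{q^n}[\phi;\sigma^s]$ (where $\phi$ corresponds to $x^{q^s}$), in which $f$ is represented by the element $\sum_{i=0}^{k}a_i\phi^i$ of degree $k$. Assuming $\dim\ker f=k$, pick $u\in\ker f\setminus\{0\}$ and set $\ell(x)=x^{q^s}-u^{q^s-1}x$; the condition $\gcd(s,n)=1$ forces $\ker\ell=\F_q\cdot u$ (a one-dimensional subspace of $\ker f$), so right-division in $R$ yields $f=f'\cdot\ell$ with $\deg f'=k-1$, and a short rank count using Part~1 shows $\dim\ker f'=k-1$, so the procedure iterates. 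This produces a factorisation
\[ f=\ell_k\,\ell_{k-1}\cdots\ell_1,\qquad \ell_i=b_i+c_i\phi\in R, \]
in which each $\ell_i$ has a one-dimensional kernel $\F_q\cdot x_i$, yielding $b_i=-c_i x_i^{q^s-1}$ for some nonzero $x_i\in\F_{q^n}$.

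\medskip

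Expanding the product in $R$ via the twisting rule $\phi\, y=y^{q^s}\phi$, a direct induction gives the constant and top coefficients of $f$ in $\phi$ as
\[ a_0=\prod_{i=1}^{k}b_i,\qquad a_k=\prod_{i=1}^{k}c_i^{q^{s(k-i)}}. \]
Applying $\N=\N_{q^n/q}$ and using that $\N$ is multiplicative and Galois invariant (so the Frobenius exponents on the $c_i$'s disappear), together with $\N(-1)=(-1)^n$ and the fact that $\N(x_i)\in\F_q^*$ combined with $(q-1)\mid(q^s-1)$ gives $\N(x_i)^{q^s-1}=1$, I obtain
\[ \N(a_0)=\prod_{i=1}^{k}\bigl((-1)^n\N(c_i)\bigr)=(-1)^{nk}\prod_{i=1}^{k}\N(c_i)=(-1)^{nk}\N(a_k). \]
The main obstacle I expect is justifying cleanly the linear factorisation in Part~2: it relies on the Euclidean structure of the Ore ring $R$ (the existence, given any one-dimensional $\F_q$-subspace $\F_q\cdot u\subseteq\ker f$, of a monic right divisor of $\phi$-degree $1$ whose kernel is exactly $\F_q\cdot u$), which is standard but needs a short auxiliary lemma. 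A self-contained alternative would be to work matrix-theoretically with the $q^s$-Dickson companion matrix of $f$ and extract both the rank bound and the norm relation from a single determinantal identity, but the Ore-ring route seems shorter and more transparent.
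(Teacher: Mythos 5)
Your proof is correct, but note that the paper does not prove this statement at all: it is quoted verbatim from Gow and Quinlan (\cite[Theorem 5]{GQ2009} and \cite[Theorem 10]{GQ2009x}), so there is no internal proof to compare against. Your two-step argument --- the generalised Moore-matrix determinant for the bound $\dim\ker f\leq k$ (valid since $\gcd(s,n)=1$ makes $x\mapsto x^{q^s}$ a generator of $\Gal(\F_{q^n}/\F_q)$ with fixed field $\F_q$), followed by the factorisation into monic linear twisted polynomials $\phi-x_i^{q^s-1}$ and the norm computation on the outer coefficients --- is essentially the route taken in the cited source, and all the steps you flag as needing care (exactness of $\ker(\phi-u^{q^s-1}x)=\F_q u$, the rank count forcing $\dim\ker f'=k-1$ so the factorisation iterates, $\N(-1)=(-1)^n$ and $\N(x_i)^{q^s-1}=1$) do go through as you describe; the only implicit point worth recording is that $\dim\ker f=k$ already forces $a_0\neq 0$, which your factorisation delivers automatically since each linear factor has nonzero constant term.
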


The next result provides combinatorial properties of Constructions \eqref{ex:H} and \eqref{ex:E}.

\begin{theorem}\label{th:desiother}
For any suitable parameters $n,d$ and $s$, the maximum $\F_q$-linear $d$-codes $\mathcal{H}_{n,d,s}$ and $\mathcal{E}_{n,d,s}$ are $(n-d+1)$-designs.
\end{theorem}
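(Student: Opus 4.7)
The plan is to split the argument by the parity of $d$: the odd-$d$ subcase is immediate from Schmidt's theorem, while the even-$d$ subcase requires a direct analysis of the dual.

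I first observe the constraints built into the two constructions: $\mathcal{E}_{n,d,s}$ is defined only when both $n$ and $d$ are odd, while $\mathcal{H}_{n,d,s}$ requires $n$ and $d$ of opposite parity. Hence the case ``$d$ odd'' covers $\mathcal{E}_{n,d,s}$ entirely and covers $\mathcal{H}_{n,d,s}$ when $n$ is even. In both instances we have a maximum additive Hermitian $d$-code with $d$ odd, so Theorem \ref{th:maxdodd} applies and yields the $(n-d+1)$-design property immediately.

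The only case requiring work is $\mathcal{H}_{n,d,s}$ with $n$ odd and $d$ even. Here I would compute (or at least estimate the minimum distance of) the dual $\mathcal{H}_{n,d,s}^{\perp}$ via the bilinear form \eqref{formula:q-polybilinearform}. Using the parameterisation of $\mathcal{H}_{n,d,s}$ by $b_1,\ldots,b_{(n-d+1)/2}\in \F_{q^{2n}}$, the condition $b(f,g)=0$ for all $f\in\mathcal{H}_{n,d,s}$ decouples into one $\F_{q^{2n}}$-linear relation (per index $j$) between the coefficients of $g$ at positions $sj \bmod n$ and $s(n-j+1) \bmod n$. Since $n$ is odd and $\gcd(s,n)=1$, a short check shows these two position sets are disjoint and together exhaust all but exactly $d-1$ positions of $\{0,\ldots,n-1\}$. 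Combined with the Hermitian constraint, these relations pin down the corresponding coefficients of $g$, so the support of any dual element is confined to the remaining $d-1$ positions; after reparametrising via $i\mapsto s^{-1}i \bmod n$ these positions form a block of $d-1$ consecutive indices.

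Any nonzero $g\in \mathcal{H}_{n,d,s}^{\perp}$ is therefore, after a bijective Frobenius substitution of the form $y=x^{q^k}$, a $q^2$-polynomial with support of the form $\sum_{j=0}^{d-2}\gamma_j y^{q^{2sj}}$. Applying Theorem \ref{GQ} with the substitution $q \mapsto q^2$, $n \mapsto n$, $s \mapsto 2s$ — valid because $\gcd(2s,n)=1$ when $n$ is odd and $\gcd(s,n)=1$ — gives $\dim_{\F_{q^2}}\ker(g)\leq d-2$, hence $\mathrm{rk}(g)\geq n-d+2$. Therefore $\mathcal{H}_{n,d,s}^{\perp}$ contains no nonzero element of rank at most $n-d+1$, and by the identification recalled in Section 2 (that $A_k'/|\mathrm{C}|$ counts rank-$k$ elements of $\mathrm{C}^{\perp}$) we conclude $A_1'=\cdots=A_{n-d+1}'=0$, which is the $(n-d+1)$-design property.

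The main obstacle will be the precise support analysis of $\mathcal{H}_{n,d,s}^{\perp}$ in the even-$d$ case: one must confirm that the Hermitian constraint on $g$ does not force additional nonzero coefficients outside the $d-1$ claimed positions, and check that after the chosen substitutions the resulting polynomial genuinely has $d-1$ consecutive $q^{2s}$-powers so that Theorem \ref{GQ} applies.
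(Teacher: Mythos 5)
Your proposal is correct and follows essentially the same route as the paper: Theorem \ref{th:maxdodd} disposes of all odd-$d$ cases, and for $\mathcal{H}_{n,d,s}$ with $n$ odd and $d$ even the paper likewise writes down the dual explicitly, observes that its elements are supported on a run of $d-1$ consecutive $q^{2s}$-exponents, and applies Theorem \ref{GQ} after a Frobenius shift to lower-bound the rank of every nonzero dual element. Your bookkeeping ($q^{2s}$-degree at most $d-2$, hence rank at least $n-d+2$) is in fact the count needed for the $(n-d+1)$-design conclusion, and is slightly sharper than the $d-1$ bound stated in the paper's own write-up of this step.
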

\begin{proof}
If $d$ is odd, the assertion follows by Theorem \ref{th:maxdodd}.
So, the remaining codes to be analyzed are $\mathcal{H}_{n,d,s}$ with $n$ odd and $d$ even.
Let start by determining its dual code $\mathcal{H}_{n,d,s}^{\perp}$ with respect to the bilinear form (\ref{formula:q-polybilinearform}).
First, we remark that
\begin{equation}\label{eq:card} |\mathcal{H}_{n,d,s}^\perp|= \frac{q^{n^2}}{|\mathcal{H}_{n,d,s}|}=q^{n(d-1)}. \end{equation}
Let us consider the following set
\[ \mathcal{D}:=\left\{ c_{\frac{n+1}2} x^{q^{2s\frac{n+1}{2}}}+ \sum_{i=\frac{n-d+3}2}^{\frac{n-1}2} c_i x^{q^{2si}}+c_i^{q^{2n-2i+1}}x^{q^{2s(n-i+1)}} \colon c_{\frac{n+1}2} \in \F_{q^n},\right.\]
\[\left. c_i \in \F_{q^{2n}}, i \in \left\{\frac{n-d+3}2, \ldots,\frac{n-1}2 \right\} \right\}. \]
It follows that each polynomial $f$ in $\mathcal{D}$ satisfies the property that \[b(f,h)=0 \,\text{ for any }\,  h \in \mathcal{H}_{n,d,s}.\] Hence, by \eqref{eq:card} we have that $\mathcal{D}=\mathcal{H}_{n,d,s}^\perp$.
Let us consider
\[ \mathcal{D}\circ x^{q^{2s(n-\frac{n-d+3}{2})}}=\{f \circ x^{q^{2s(n-\frac{n-d+3}{2})}} \colon f(x) \in \mathcal{D} \}. \]
The polynomials in $\mathcal{D}\circ x^{q^{2s(n-\frac{n-d+3}{2})}}$ have $q^{2s}$-degree less than or equal to $d-1$, and hence by Theorem \ref{GQ} we have that
\[ \dim_{\F_{q^2}} \ker f(x)= \dim_{\F_{q^2}} \ker f \circ x^{q^{2s(n-\frac{n-d+3}{2})}} \leq d-1,   \]
for each $f \in \mathcal{D}\setminus\{0\}$, i.e. $\mathrm{rk}\, f\geq n-d+1$ for each $f \in \mathcal{D}\setminus\{0\}$.
Hence $\mathcal{D}$ is an $(n-d+1)$-code and the assertion is then proved.
\end{proof}

Moreover in \cite{MiriamSchmidt_master'sthesis} and in \cite{Schmidt2018} another family of additive $2$-codes in $\mathrm{H}_n(q^2)$ was exhibited which exists for any value of the positive integer $n$. In fact, \begin{equation}\label{example:Myriam}\mathrm{M} = \{(m_{i,j})_{1\leq i,j \leq n} \, \in \,{\rm H}_n(q^2) \,\, \colon \,\, m_{i,i}=0 \,\,\,\, \forall \,\, 1 \leq i \leq n\},\end{equation} see \cite[Theorem 6.1]{MiriamSchmidt_master'sthesis}. We are going to show that this example is not a $1$-design and hence it cannot be equivalent to the aforementioned families.

\smallskip

By simply adapting arguments exhibited in \cite[Section $3.4$]{Schmidt2010}, designs in the Hermitian association scheme can be characterized by means of the following property

\begin{theorem}\label{th:designsinhermitianscheme}
Let $U$ be a $t$-dimensional vector subspace of $V(n,q^2)=\F_{q^2}^n$ and let $H:\,U\times U \rightarrow \F_{q^2}$ be a Hermitian bilinear form on $U$.
Then, a $d$-code $\mathrm{C} \subset {\rm H}_n(q^2)$ is a $t$-design if and only if the number of forms in $\mathrm{C}$ that are an extension of $H$, is independent of the choice of $U$ and $H$.
\end{theorem}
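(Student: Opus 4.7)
The plan is to convert the extension-count $N(U,H)$ into a sum of character values on $({\rm H}_n(q^2),+)$ via Fourier inversion, and then use the standard Delsarte identity that ties those character sums to the dual inner distribution $A'_k$. Recall $(g,h)\mapsto \chi(\tr(gh))$ is a non-degenerate pairing on ${\rm H}_t(q^2)$, of total size $q^{t^2}$, so the characters of $({\rm H}_t(q^2),+)$ are exactly $h\mapsto\chi(\tr(gh))$ for $g\in{\rm H}_t(q^2)$.

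First I would fix a $t$-dimensional subspace $U\subseteq \F_{q^2}^n$, pick a basis of $\F_{q^2}^n$ whose first $t$ vectors span $U$, and identify $f|_{U\times U}$ with the top-left $t\times t$ block of the matrix of $f$. For $g\in{\rm H}_t(q^2)$, let $\widetilde{g}\in{\rm H}_n(q^2)$ be its extension by zero; then $\tr(\widetilde{g}f)=\tr(g\,f|_{U\times U})$. Fourier inversion on $({\rm H}_t(q^2),+)$ yields
\[
N(U,H)=\sum_{f\in\mathrm{C}}\mathbb{1}[f|_{U\times U}=H]=\frac{|\mathrm{C}|}{q^{t^2}}+\frac{1}{q^{t^2}}\sum_{\substack{g\in{\rm H}_t(q^2)\\ g\neq 0}}\chi(-\tr(gH))\,S_{\mathrm{C}}(\widetilde{g}),
\]
where $S_{\mathrm{C}}(A)=\sum_{f\in\mathrm{C}}\la A,f\ra$. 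By linear independence of the characters $H\mapsto\chi(-\tr(gH))$, the count $N(U,H)$ is independent of $H$ (necessarily equal to $|\mathrm{C}|/q^{t^2}$, and hence also independent of $U$) if and only if $S_{\mathrm{C}}(\widetilde{g})=0$ for every nonzero $g\in{\rm H}_t(q^2)$.

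Next I would let $U$ range over all $t$-dimensional subspaces. The set of matrices that arise as $\widetilde{g}$ (for $g\neq 0$ and some $U$) is exactly the set of nonzero Hermitian matrices of rank at most $t$: given any $h\in{\rm H}_n(q^2)$ with $1\le \rk(h)=r\le t$, choose a complement $V_0$ of dimension $r$ to the radical of $h$ and any $(t-r)$-dimensional subspace $V_1$ of the radical, then $h$ is supported on $U=V_0\oplus V_1$ and $g=h|_{U\times U}$ is nonzero. Conversely every $\widetilde{g}$ has rank $\rk(g)\in\{1,\dots,t\}$. Therefore the extension-count is independent of $(U,H)$ if and only if $S_{\mathrm{C}}(h)=0$ for every Hermitian $h$ with $1\le\rk(h)\le t$.

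Finally I would invoke the Delsarte-type identity
\[
\sum_{g\in\cH_k}|S_{\mathrm{C}}(g)|^2=\sum_{(f_1,f_2)\in\mathrm{C}\times\mathrm{C}}\sum_{g\in\cH_k}\la g,f_1-f_2\ra=\sum_{i=0}^{n}|(\mathrm{C}\times\mathrm{C})\cap R_i|\,Q_k(i)=|\mathrm{C}|\,A'_k,
\]
which follows by expanding the modulus squared and recognising the inner sum as $Q_k(\rk(f_1-f_2))$. Since $|S_{\mathrm{C}}(g)|^2\ge 0$, we conclude $A'_k=0$ if and only if $S_{\mathrm{C}}(g)=0$ for every $g\in\cH_k$. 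Combined with the previous paragraph, $N(U,H)$ is independent of $(U,H)$ iff $A'_1=\cdots=A'_t=0$, i.e.\ iff $\mathrm{C}$ is a $t$-design.

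The main obstacle is the middle step: the surjectivity claim that every Hermitian form on $\F_{q^2}^n$ of rank $r\le t$ is an extension-by-zero from some $t$-dimensional $U$. Without this one only gets the design condition on a proper subset of $\bigcup_{k=1}^t\cH_k$; the radical-plus-complement construction above is what makes the equivalence work in both directions. Everything else is a fairly routine Fourier/character computation, with some care needed to verify that $(g,h)\mapsto \chi(\tr(gh))$ is a non-degenerate $\F_q$-bilinear form on ${\rm H}_t(q^2)$ (already implicit in the setup of the association scheme in the excerpt).
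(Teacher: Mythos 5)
Your argument is correct, and it is essentially the argument the paper is pointing at: the paper gives no proof of this theorem, deferring to an adaptation of \cite[Section 3.4]{Schmidt2010}, which is exactly this Fourier-inversion computation of the extension count combined with the identity $|\mathrm{C}|A_k'=\sum_{g\in\cH_k}|S_{\mathrm{C}}(g)|^2\geq 0$ and the observation that the nonzero forms supported on some $t$-dimensional $U$ are precisely those of rank between $1$ and $t$. The only bookkeeping worth tightening is that, since $\langle\cdot,\cdot\rangle$ is not invariant under change of basis, your ``extension by zero'' $\widetilde{g}$ should be read as $P^*gP$ with $P$ a full-rank $t\times n$ matrix whose rows span $U$ (so that $\tr(g^*\,PfP^*)=\tr((P^*gP)^*f)$); the surjectivity of $(P,g)\mapsto P^*gP$ onto the rank-$\le t$ Hermitian matrices then follows from the congruence classification of Hermitian forms, which is the content of your radical-plus-complement step.
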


As a consequence we have the following result.

\begin{theorem}\label{th:nodesiMir}
The $2$-code $\mathrm{M}$ is not a $t$-design for any $t\neq 0$.
\end{theorem}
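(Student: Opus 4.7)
The plan is to use the characterization provided in Theorem \ref{th:designsinhermitianscheme}, applied with $t=1$, after first reducing the statement to this single case. Indeed, by the definition of a $t$-design given in the paper ($A_1' = \cdots = A_t' = 0$), any $t$-design with $t \geq 1$ is automatically a $1$-design. Hence it suffices to establish that $\mathrm{M}$ fails to be a $1$-design; the full statement for arbitrary $t \geq 1$ follows immediately.

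To contradict the $1$-design property via Theorem \ref{th:designsinhermitianscheme}, I only need to exhibit a $1$-dimensional subspace $U \subset \F_{q^2}^n$ and two Hermitian forms $H_1, H_2$ on $U$ for which the number of extensions lying in $\mathrm{M}$ differs. The natural choice is $U = \langle \mathbf{e}_1 \rangle$, where $\mathbf{e}_1$ is the first standard basis vector. A Hermitian form on such a $1$-dimensional space is determined by a single value $h = H(\mathbf{e}_1, \mathbf{e}_1) \in \F_q$ (the fixed field of the conjugation). For a Hermitian matrix $A = (a_{i,j}) \in {\rm H}_n(q^2)$ interpreted as a form, the extension condition at $\mathbf{e}_1$ reads precisely $a_{1,1} = h$.

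Now I compare two extreme cases: taking $H_1 \equiv 0$ (i.e.\ $h=0$), every matrix in $\mathrm{M}$ is an extension, since by definition of $\mathrm{M}$ in \eqref{example:Myriam} all diagonal entries vanish. Thus the number of extensions equals $|\mathrm{M}|$. On the other hand, for any $h \in \F_q^*$ and $H_2(\mathbf{e}_1,\mathbf{e}_1) = h$, no matrix in $\mathrm{M}$ can be an extension, because every matrix in $\mathrm{M}$ has $a_{1,1}=0 \neq h$. Hence the number of extensions is $0$. These two counts are different (note $|\mathrm{M}| = q^{n(n-1)} \neq 0$), so by Theorem \ref{th:designsinhermitianscheme} with $t=1$, $\mathrm{M}$ is not a $1$-design, and the argument above shows that consequently $\mathrm{M}$ is not a $t$-design for any $t \geq 1$.

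The argument is essentially immediate once the right framework is invoked, so there is no serious obstacle; the only subtle point is the reduction from arbitrary $t \neq 0$ to $t = 1$, which relies solely on unwinding the definition of a $t$-design in the association-scheme language.
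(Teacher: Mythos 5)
Your proof is correct and follows essentially the same route as the paper: reduce to the $t=1$ case via the definition of a $t$-design, then apply Theorem \ref{th:designsinhermitianscheme} to $U=\langle \mathbf{e}_1\rangle$ and compare the extension counts for the zero form ($|\mathrm{M}|$ extensions) and a nonzero form ($0$ extensions). The only difference is that you spell out the reduction to $t=1$ and the count $|\mathrm{M}|=q^{n(n-1)}$ explicitly, which the paper leaves implicit.
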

\begin{proof}
It is enough to show that $\mathrm{M}$ is not a $1$-design.
Indeed, let $U=\langle (1,0,\ldots,0)\rangle_{\F_{q^2}}$ a one-dimensional subspace of $\F_{q^2}^n$.
The number of forms in $\mathrm{M}$ that are extension of the $1\times 1$ Hermitian bilinear for $H=(0)$ is $|\mathrm{M}|$, and the number of forms in $\mathrm{M}$ that are extension of the $1\times 1$ Hermitian bilinear for $H=(1)$ is $0$.
Therefore, by Theorem \ref{th:designsinhermitianscheme} we have that $\mathrm{M}$ is not a $1$-design.
\end{proof}

Therefore, we have the following.

\begin{corollary}
The $2$-code $\mathcal{M}$ is not equivalent to $\mathcal{H}_{n,2,s}$, for any $n$ and $s$.
\end{corollary}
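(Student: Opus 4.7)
The plan is to argue by contradiction using a rank-metric invariant that distinguishes the two codes, namely the $1$-design property. The key observation is that the equivalence maps $\Theta$ of the form \eqref{eq:theta} preserve rank, and therefore preserve the inner distribution $(A_0,A_1,\dots,A_n)$ of any subset of $\mathrm{H}_n(q^2)$. Since the dual inner distribution $(A_0',A_1',\dots,A_n')$ is determined by the inner distribution via the eigenvalues $Q_k(i)$ of the Hermitian association scheme, the property of being a $t$-design is invariant under equivalence in $\mathrm{H}_n(q^2)$. This reduces the problem to exhibiting a value of $t$ separating the two families.

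I would then assemble the two ingredients already established. On one side, Theorem \ref{th:desiother} (applied with $d=2$) shows that $\mathcal{H}_{n,2,s}$ is an $(n-d+1)=(n-1)$-design for every admissible $n$ and $s$; since we necessarily have $n\geq 3$ (because $1\leq d\leq n-1$ and $d=2$), the defining vanishing $A_1'=\cdots=A_{n-1}'=0$ in particular forces $A_1'=0$, so $\mathcal{H}_{n,2,s}$ is a $1$-design. On the other side, Theorem \ref{th:nodesiMir} asserts that $\mathrm{M}$ is not a $t$-design for any $t\neq 0$, and in particular fails to be a $1$-design. Combining these facts with the invariance noted above rules out equivalence.

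Concretely, the write-up would proceed as follows: assume for contradiction that $\mathrm{M}$ and $\mathcal{H}_{n,2,s}$ are equivalent in $\mathcal{H}_n(q^2)$; then they share the same inner distribution, hence the same dual inner distribution, so $\mathrm{M}$ inherits the $1$-design property from $\mathcal{H}_{n,2,s}$, contradicting Theorem \ref{th:nodesiMir}. No delicate step is expected here: the only point needing a brief sentence of justification is the invariance of $t$-designs under equivalence in $\mathrm{H}_n(q^2)$, which is immediate from the rank-preserving nature of the maps \eqref{eq:theta} and the definition of $A_k'$ via the scheme eigenvalues; everything else is a direct appeal to Theorems \ref{th:desiother} and \ref{th:nodesiMir}.
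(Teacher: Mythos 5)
Your proposal is correct and follows essentially the same route as the paper, which states this corollary as an immediate consequence of Theorem \ref{th:desiother} (giving that $\mathcal{H}_{n,2,s}$ is a $1$-design) and Theorem \ref{th:nodesiMir} (giving that $\mathrm{M}$ is not). The only detail you add beyond what the paper leaves implicit is the justification that the $t$-design property is invariant under equivalence in ${\rm H}_n(q^2)$, which is accurate and worth the sentence you give it.
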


As pointed out in Theorem \ref{th:maxdodd}, any maximum $d$-code is an $(n-d+1)$-design when $d$ is odd.
For the $d$ even case this is not true. Indeed, by Theorem \ref{th:nodesiMir}, we have example of maximum $2$-code which is not even a $1$-design, whereas by Theorem \ref{th:desiother} we have examples of maximum $d$-codes which are $(n-d+1)$-designs.

\section{New constructions of maximum Hermitian $2$-code}\label{sec:construction}

We start by pointing out the technique developed in \cite{TZ}, in order to use it in the Hermitian setting similarly to what has been done in \cite{LLTZ} in the symmetric framework.

\smallskip

In \cite{TZ}, the following was proved.

\begin{lemma}\label{tec}
Let $q$ be an odd prime power, let $n \in\mathbb{Z}^+$ and $s \in \mathbb{Z}$ be two integers such that $n$ is odd and $(s,2n)=1$. Let $\gamma \in \F_{q^{2n}}$ with $\N_{q^{2n}/q}(\gamma)$ a non-square in $\F_q$.
If $\displaystyle f(x)=a x+ \sum_{i=0}^{k-1} a_i x^{{q^{is}}}+\gamma b x^{q^{sk}} \in \mathcal{L}_{2n,q}$ with $a_i \in \F_{q^{2n}}$, $a,b \in \F_{q^n}$, then $\dim_{\F_q} \ker f \leq k-1$ and $\mathrm{rk} \,\,f \geq 2n-k+1$.
\end{lemma}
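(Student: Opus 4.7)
The plan is to apply the Gow--Quinlan theorem (Theorem \ref{GQ}) to $f$ viewed inside $\mathcal{L}_{2n,q}$: the hypothesis $\gcd(s,2n)=1$ is exactly what that theorem requires after replacing $n$ by $2n$, and $f$ has $q^s$-degree at most $k$, so GQ immediately yields $\dim_{\F_q}\ker f \le k$. The rank bound $\mathrm{rk}\, f \ge 2n-k+1$ will then follow from rank-nullity on the $2n$-dimensional $\F_q$-space $\F_{q^{2n}}$, so the real work is to push the kernel bound down by one.

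I would proceed by contradiction, assuming $\dim_{\F_q}\ker f = k$. If $b=0$ the leading coefficient $\gamma b$ vanishes and the $q^s$-degree drops to at most $k-1$, in which case Theorem \ref{GQ} applied to the truncated polynomial directly gives $\dim_{\F_q}\ker f \le k-1$, already the desired bound. So assume $b\neq 0$, whence $\gamma b\neq 0$. The refined part of Theorem \ref{GQ} then forces the norm identity
\[ \N_{q^{2n}/q}(a) \;=\; (-1)^{2nk}\,\N_{q^{2n}/q}(\gamma b) \;=\; \N_{q^{2n}/q}(\gamma b). \]
The crux is to show this cannot hold. The key observation is that for any $\alpha \in \F_{q^n}$ one has $\alpha^{q^n}=\alpha$, so by the tower law
\[ \N_{q^{2n}/q}(\alpha) \;=\; \N_{q^n/q}\!\bigl(\alpha\cdot\alpha^{q^n}\bigr) \;=\; \N_{q^n/q}(\alpha)^2, \]
which is a square in $\F_q$ (possibly zero). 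Hence both $\N_{q^{2n}/q}(a)$ and $\N_{q^{2n}/q}(b)$ are squares. Since $q$ is odd and $\N_{q^{2n}/q}(\gamma)$ is a non-square, multiplicativity makes $\N_{q^{2n}/q}(\gamma b)=\N_{q^{2n}/q}(\gamma)\cdot\N_{q^{2n}/q}(b)$ a non-square in $\F_q^{\ast}$ (it is nonzero because $\gamma b\neq 0$), whereas $\N_{q^{2n}/q}(a)$ is either zero or a nonzero square. In either subcase the norm identity fails, contradicting $\dim_{\F_q}\ker f=k$. Therefore $\dim_{\F_q}\ker f\le k-1$ and $\mathrm{rk}\, f\ge 2n-k+1$ by rank-nullity.

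The only real obstacle I anticipate is that the Gow--Quinlan norm relation is sensitive to the leading coefficient being nonzero, which is exactly why the $b=0$ case has to be separated out and handled by re-applying Theorem \ref{GQ} at degree $k-1$. Once that is done, the entire argument is driven by the simple but essential squares-versus-non-squares dichotomy: the subfield restriction $a,b\in\F_{q^n}$ compresses the relevant norms into $(\F_q^\ast)^2\cup\{0\}$, and the hypothesis on $\N_{q^{2n}/q}(\gamma)$ then contradicts the Gow--Quinlan norm identity. Everything else is routine.
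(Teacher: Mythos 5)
Your proof is correct and follows essentially the same route as the paper: apply Theorem \ref{GQ}, assume $\dim_{\F_q}\ker f=k$, and derive a contradiction from the norm identity because $\N_{q^{2n}/q}$ of an element of $\F_{q^n}$ is a square in $\F_q$ while $\N_{q^{2n}/q}(\gamma b)$ is not. Your separate treatment of the degenerate case $b=0$ (which the paper glosses over when dividing by $b$) is a small but welcome extra precaution.
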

\begin{proof}
By Theorem \ref{GQ} $\dim_{\F_q} \ker f \leq k$. By way of contradiction, let us assume that the dimension of the kernel of $f(x)$ is $k$.
Hence, by Theorem \ref{GQ}, it follows that
\[ \N_{q^{2n}/q}(a)=\N_{q^{2n}/q}(b \gamma), \]
i.e., since $a,b \in \F_{q^n}$,
\[ \N_{q^{2n}/q}(\gamma)=\N_{q^{2n}/q}\left(\frac{a}{b}\right)=\N_{q^{n}/q}\left(\frac{a}{b}\right)^2, \]
which gives a contradiction.
The second part follows from the relation $\mathrm{rk} \,\, f = 2n- \dim_{\F_q} \ker \,\, f$.
\end{proof}

We are now able to generalize the construction of \cite{LLTZ} to the Hermitian setting. Precisely, we have

\begin{theorem}\label{thm:newhermitiancode}
Let $q$ be an odd prime power, let $n \in\mathbb{Z}^+$ and $s \in \mathbb{Z}$ be two integers such that $n$ is odd and $(s,2n)=1$. Let $\gamma \in \F_{q^{2n}}$ with $\N_{q^{2n}/q}(\gamma)$ a non-square in $\F_q$.
Then
\begin{small}
\begin{equation*} \begin{split} \tilde{\mathcal{H}}_s&=\biggl \{ b x^{q^{2s\frac{n+1}{2}}} + a\gamma x^{q^{2s\frac{n-1}{2}}}+(a\gamma)^{q^{s(n+2)}} x^{q^{2s\frac{n+3}{2}}} + \sum_{i=1}^{\frac{n-3}2} (c_ix^{q^{2si}}+c_i^{q^{s(2n-2i+1)}}x^{q^{2s(n-i+1)}}) \\ & \colon a,b \in \F_{q^n}, c_i \in \F_{q^{2n}} \biggr \}. \end{split} \end{equation*}
\end{small}
is a maximum Hermitian $\F_q$-linear $2$-code.
\end{theorem}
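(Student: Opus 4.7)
The plan is to verify four things in sequence: (i) $\tilde{\mathcal{H}}_s\subseteq \mathcal{H}_n(q^2)$; (ii) $\tilde{\mathcal{H}}_s$ is $\F_q$-additive; (iii) $|\tilde{\mathcal{H}}_s|=q^{n(n-1)}$; (iv) every nonzero $f\in\tilde{\mathcal{H}}_s$ has $\F_{q^2}$-rank at least $2$. Items (ii) and (iii) are immediate from the $\F_q$-linear parameterization by $(a,b,c_1,\dots,c_{(n-3)/2})\in\F_{q^n}\oplus\F_{q^n}\oplus\F_{q^{2n}}^{(n-3)/2}$, which has total $\F_q$-dimension $n+n+n(n-3)=n(n-1)$, matching the Singleton-like bound \eqref{eq:SingletonBoundHermitian} for $d=2$. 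For item (i), I would check the Hermitian coefficient relation piece by piece: the pairs indexed by $i=1,\dots,(n-3)/2$ are built to satisfy it by construction; the pair $(a\gamma,(a\gamma)^{q^{s(n+2)}})$ is exactly the ``$i=(n-1)/2$'' pair (since then $n-i+1=(n+3)/2$ and $2n-2i+1=n+2$); and the middle monomial $b\,x^{q^{s(n+1)}}$ is the self-paired $i=(n+1)/2$ term, whose Hermitian constraint demands that the coefficient be fixed by $x\mapsto x^{q^{sn}}$, which for $b\in\F_{q^n}$ is automatic from $\gcd(s,n)=1$.

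The core of the proof is (iv). Fix a nonzero $f\in\tilde{\mathcal{H}}_s$ and put
\[
h(x)=f\bigl(x^{q^{s(n-1)}}\bigr).
\]
Since $x\mapsto x^{q^{s(n-1)}}$ is an $\F_q$-linear bijection of $\F_{q^{2n}}$, $\dim_{\F_q}\ker h=\dim_{\F_q}\ker f$. Reducing the shifted exponents modulo $2n$, $h$ lies in $\mathcal{L}_{2n,q}$ with $q^s$-exponent support contained in $\{0,2,4,\dots,2n-2\}$: the coefficient at $q^{0}$ is $b\in\F_{q^n}$ (originally the coefficient at $q^{s(n+1)}$ in $f$) and the coefficient at $q^{s(2n-2)}$ is $a\gamma=\gamma\cdot a$ with $a\in\F_{q^n}$ (originally the coefficient at $q^{s(n-1)}$ in $f$). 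Thus $h$ fits the hypotheses of Lemma \ref{tec} with $k=2n-2$, so $\dim_{\F_q}\ker h\le 2n-3$. When $a=0$ or $b=0$ the norm contradiction behind Lemma \ref{tec} either still yields the same bound (if exactly one anchor vanishes) or is superseded by a direct appeal to Theorem \ref{GQ} on the reduced $q^s$-degree (if both anchors vanish but some $c_i\ne 0$), so $\dim_{\F_q}\ker h\le 2n-3$ holds uniformly whenever $f\ne 0$.

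To close the gap I would use a parity argument. Every element of $\tilde{\mathcal{H}}_s$ is a $q^2$-polynomial over $\F_{q^{2n}}$, so $f$ is $\F_{q^2}$-linear, its kernel is an $\F_{q^2}$-subspace of $\F_{q^{2n}}$, and therefore $\dim_{\F_q}\ker f=2\dim_{\F_{q^2}}\ker f$ is automatically even. Combined with $\dim_{\F_q}\ker f=\dim_{\F_q}\ker h\le 2n-3$, the evenness forces $\dim_{\F_q}\ker f\le 2n-4$, i.e.\ $\mathrm{rk}_{\F_{q^2}}f\ge 2$. This proves (iv) and, together with (i)--(iii), shows that $\tilde{\mathcal{H}}_s$ is a maximum Hermitian $\F_q$-linear $2$-code. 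The main technical obstacle I anticipate is identifying the correct shift in step (iv): one has to recognize that the substitution by $q^{s(n-1)}$ is the unique alignment that simultaneously places the $\F_{q^n}$-anchor $b$ at position $0$ and the $\gamma\F_{q^n}$-anchor $a\gamma$ at position $2n-2$ of the $q^s$-polynomial $h$, which is precisely the configuration needed for Lemma \ref{tec} to deliver its ``odd-parity'' bound that the subsequent parity step then tightens by one.
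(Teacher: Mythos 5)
Your proposal is correct and follows essentially the same route as the paper: compose $f$ with a power of the Frobenius so that the $\F_{q^n}$-coefficient and the $\gamma\F_{q^n}$-coefficient land on the constant and leading terms, then invoke Lemma \ref{tec} with $k=2n-2$ to bound $\dim_{\F_q}\ker f$ by $2n-3$. The remaining differences are cosmetic: the paper shifts by $x^{q^{s(n-3)}}$ instead of $x^{q^{s(n-1)}}$ (so the $\gamma$-multiple ends up on the constant term rather than the leading one) and phrases the conclusion as a contradiction, using Theorem \ref{GQ} over $\F_{q^2}$ to pin the kernel dimension at $n-1$, whereas your direct parity step ($\dim_{\F_q}\ker f$ is even since $\ker f$ is an $\F_{q^2}$-subspace) is exactly the same factor-of-two observation, stated affirmatively.
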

\begin{proof}
First we note that $|\tilde{\mathcal{H}}_s|=q^{2n\frac{n-3}2+2n}=q^{n(n-1)}$ which, according to \eqref{eq:SingletonBoundHermitian}, is the maximum possible size providing $d=2$.
Now we have to show that $\dim_{\F_{q^2}} \ker f \leq n-2$ for each $f \in \tilde{\mathcal{H}}_s$.
Indeed, if $\dim_{\F_{q^2}} \ker f \leq n-2$, then $\mathrm{rk}\,\, f \geq n-(n-2)=2$.

By way of contradiction, we may suppose that there exists

\[ f(x)= b x^{q^{2s\frac{n+1}{2}}} + a\gamma x^{q^{2s\frac{n-1}{2}}}+(a\gamma)^{q^{s(n+2)}} x^{q^{2s\frac{n+3}{2}}} + \sum_{i=1}^{\frac{n-3}2} (c_ix^{q^{2si}}+c_i^{q^{s(2n-2i+1)}}x^{q^{2s(n-i+1)}}) \]

\noindent in $\tilde{\mathcal{H}}_s$ such that $\dim_{\F_{q^2}} \ker \,\,f \geq n-1$.
Clearly, the $\dim_{\F_{q^2}} \ker \,\, f = \dim_{\F_{q^2}} \ker \,\, f \circ x^{q^{si}}$ for each $i \in \{0,\ldots,2n-1\}$.
In particular,
\[ f \circ x^{q^{s(n-3)}} := b x^{q^{2s(n-1)}} + a\gamma x^{q^{2s(n-2)}}+(a\gamma)^{q^{s(n+2)}} x + \sum_{i=0}^{\frac{n-3}2} c_ix^{q^{s(2i+n-3)}}+c_i^{q^{2n-2i+1}}x^{q^{s(n-2i-1)}} \]
has $q^{2s}$-degree at most $n-1$ and hence, by Theorem \ref{GQ}, it follows that $\dim_{\F_{q^2}}\ker f \leq n-1$.
When we look at $f\circ  x^{q^{s(n-3)}}$ as a $q$-polynomial in $\F_{q^{2n}}$ we have that $\dim_{\F_q} \ker (f\circ  x^{q^{s(n-3)}})=2n-2$; a contradiction by Lemma \ref{tec}.
Hence, $\dim_{\F_{q^2}} \ker\,\, f \leq n-2$.
\end{proof}



Also we are in the position to determine its dual code $\tilde{\mathcal{H}}^{\perp}_s$ of $\tilde{\mathcal{H}}_s$. Precisely, we have

\begin{theorem}
Let $\gamma \in \F_{q^{2n}}$ with $N_{q^{2n}/q}(\gamma)$ a non-square element of $\F_q$. Then, the dual code of $\tilde{\mathcal{H}}_s$ is
\[ \tilde{\mathcal{H}}_s^\perp=\left\{  c \gamma^{-1} \alpha x^{q^{2s(\frac{n-1}2)}}+(c\gamma^{-1} \alpha)^{q^{s(n+2)}}x^{q^{2s\frac{n+3}2}} \colon c \in \F_{q^n} \right\}, \]
with $\alpha \in \F_{q^{2n}}$ and $\alpha^{q-1}=-1$.
\end{theorem}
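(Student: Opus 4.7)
The plan is to show that the proposed set $\mathcal{D}$ is contained in $\tilde{\mathcal{H}}_s^\perp$, and then to conclude by matching cardinalities. Since $\gamma \neq 0$ and $\alpha \neq 0$, the map $c \mapsto c\gamma^{-1}\alpha$ is injective on $\F_{q^n}$, so $|\mathcal{D}| = q^n$. On the other hand, since the bilinear pairing in \eqref{formula:q-polybilinearform} is non-degenerate on $\mathcal{H}_n(q^2)$, one has $|\tilde{\mathcal{H}}_s^\perp| = q^{n^2}/|\tilde{\mathcal{H}}_s| = q^n$. Hence the two sets coincide as soon as the inclusion $\mathcal{D}\subseteq \tilde{\mathcal{H}}_s^\perp$ is verified.

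I would first check $\mathcal{D} \subseteq \mathcal{H}_n(q^2)$. Writing $f_c$ in the $x^{q^{2si}}$-expansion, its two nonzero coefficients sit at the positions $i=(n-1)/2$ and $i=(n+3)/2$, which form a Hermitian pair as $(n-1)/2 + (n+3)/2 = n+1$. Moreover, the Frobenius exponent $q^{s(n+2)}$ linking them coincides with $q^{s(2n-2\cdot (n-1)/2+1)}$, exactly the exponent prescribed by the Hermitian condition already realised by the analogous pair $\{a\gamma,(a\gamma)^{q^{s(n+2)}}\}$ of $\tilde{\mathcal{H}}_s$. Hence $f_c \in \mathcal{H}_n(q^2)$.

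Next, expand the bilinear form as
\[ b(f_c, g) = \mathrm{Tr}_{q^{2n}/q^2}\Bigl(\sum_i \tilde a_i \tilde b_i\Bigr), \]
where $\tilde a_i$ and $\tilde b_i$ denote the coefficients in the $x^{q^{2si}}$-expansion of $f_c$ and of a generic $g \in \tilde{\mathcal{H}}_s$. The only $i$ with $\tilde a_i \neq 0$ are $(n-1)/2$ and $(n+3)/2$, and the corresponding $\tilde b_i$'s of $g$ are $a\gamma$ and $(a\gamma)^{q^{s(n+2)}}$. The $\gamma$-factors of $f_c$ cancel those from $g$, yielding
\[ \sum_i \tilde a_i \tilde b_i = c a \alpha + (c a \alpha)^{q^{s(n+2)}}. \]

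Setting $z := ca\alpha$, the final task is to show that $\mathrm{Tr}_{q^{2n}/q^2}(z + z^{q^{s(n+2)}}) = 0$. From $\alpha^{q-1}=-1$ one deduces $\alpha^{q^2}=\alpha$, so $\alpha \in \F_{q^2}$; as $n$ is odd, $\alpha^{q^n} = \alpha^q = -\alpha$. Since $c, a \in \F_{q^n}$, this gives $z^{q^n} = -z$. Moreover $s$ is odd, so $sn \equiv n \pmod{2n}$, and in $\F_{q^{2n}}$ we obtain
\[ z^{q^{s(n+2)}} = \bigl(z^{q^{sn}}\bigr)^{q^{2s}} = \bigl(z^{q^n}\bigr)^{q^{2s}} = -z^{q^{2s}}. \]
Therefore $z + z^{q^{s(n+2)}} = z - z^{q^{2s}}$, whose trace to $\F_{q^2}$ vanishes by invariance of $\mathrm{Tr}_{q^{2n}/q^2}$ under the $q^{2s}$-Frobenius. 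The main obstacle is the sign tracking in this last step; once the identity $z^{q^n} = -z$ is secured from the hypothesis $\alpha^{q-1} = -1$ combined with the odd parity of $n$, the collapse of the trace is immediate.
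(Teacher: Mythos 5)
Your proposal is correct and follows essentially the same route as the paper: a cardinality count $|\tilde{\mathcal{H}}_s^\perp|=q^n$ combined with a direct verification that $b(f,g)=\Tr_{q^{2n}/q^2}\bigl(ca\alpha+(ca\alpha)^{q^{s(n+2)}}\bigr)$ vanishes, which the paper does by reducing the second summand to $ca\alpha^q$ and factoring $ca\alpha(1+\alpha^{q-1})=0$, while you equivalently use $z^{q^n}=-z$ and the $q^{2s}$-invariance of the trace. Your additional explicit check that the proposed dual elements satisfy the Hermitian coefficient condition is a sensible detail the paper leaves implicit.
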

\begin{proof}
We have that $|\tilde{\mathcal{H}}_s^{\perp}|=q^{n^2}/|\tilde{\mathcal{H}}_s|=q^n $.  Let
\[f(x)=b x^{q^{2s\frac{n+1}{2}}} + a\gamma x^{q^{2s\frac{n-1}{2}}}+(a\gamma)^{q^{s(n+2)}} x^{q^{2s\frac{n+3}{2}}} + \sum_{i=1}^{\frac{n-3}2} (c_ix^{q^{2si}}+c_i^{q^{s(2n-2i+1)}}x^{q^{2s(n-i+1)}}) \in\tilde{\mathcal{H}}_s\]
and
\[g(x)=c \gamma^{-1} \alpha x^{q^{2s(\frac{n-1}2)}}+(c\gamma^{-1} \alpha)^{q^{s(n+2)}}x^{q^{2s\frac{n+3}2}}\]
with $c \in \F_{q^n}$, then
\[ b(f,g)=\mathrm{Tr}_{q^{2n}/q^2}\left(ac \alpha+ (ac \alpha)^{q^{s(n+2)}}\right)=\mathrm{Tr}_{q^{2n}/q^2}\left(ac \alpha+ ac \alpha^{q}\right)=  \]
\[ \mathrm{Tr}_{q^{2n}/q^2}\left(ac \alpha (1+\alpha^{q-1})\right)=0. \]
The assertion then follows.
\end{proof}

\begin{corollary}\label{cor:designproperty}
The $2$-code $\tilde{\mathcal{H}}_s$ is an $(n-1)$-design.
\end{corollary}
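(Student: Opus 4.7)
My plan is to exploit the explicit description of $\tilde{\mathcal{H}}_s^{\perp}$ from the previous theorem together with the interpretation of the dual inner distribution given in Section~2: the coefficient $A_k'/|\tilde{\mathcal{H}}_s|$ counts precisely the rank-$k$ matrices in $\tilde{\mathcal{H}}_s^{\perp}$. Consequently $\tilde{\mathcal{H}}_s$ is an $(n-1)$-design if and only if every one of the $q^n - 1$ nonzero elements of $\tilde{\mathcal{H}}_s^{\perp}$ is invertible as an $\F_{q^2}$-linear map on $\F_{q^{2n}}$.

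Fix $c \in \F_{q^n}^{*}$ and let $g$ be the associated element of $\tilde{\mathcal{H}}_s^{\perp}$. I would first simplify the problem by composing with the bijection $x \mapsto x^{q^{s(n+1)}}$: reducing exponents modulo $2n$, this replaces $g$ by the binomial
\[ h(z) := g(z^{q^{s(n+1)}}) = A z + B z^{q^{4s}}, \]
with $A = c\gamma^{-1}\alpha$ and $B = A^{q^{s(n+2)}}$, and it is enough to prove $\ker h = \{0\}$. For $z \neq 0$ one has $h(z)=0$ if and only if $z^{q^{4s}-1} = -A/B$, so the task becomes showing that $-A/B$ is not a $(q^{4s}-1)$-th power in $\F_{q^{2n}}^{*}$.

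A short number-theoretic step converts this into a norm condition: because $n$ is odd and $\gcd(s,2n)=1$, we have $\gcd(4s,2n)=2$, so the image of $z \mapsto z^{q^{4s}-1}$ is the unique subgroup of $\F_{q^{2n}}^{*}$ of order $(q^{2n}-1)/(q^2-1)$, which coincides with $\{w \in \F_{q^{2n}}^{*} : \N_{q^{2n}/q^2}(w)=1\}$. Hence non-solvability of $z^{q^{4s}-1} = -A/B$ is equivalent to $\N_{q^{2n}/q^2}(-A/B) \neq 1$.

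The main obstacle, and the only step where the hypothesis on $\gamma$ enters, is this final norm computation. Using $c \in \F_{q^n}$, $\alpha^{q-1}=-1$, and that both $s$ and $n+2$ are odd, I would derive
\[ -A/B = c^{\,1-q^{2s}}\,\gamma^{\,q^{s(n+2)}-1}. \]
Since $n$ is odd, the map $i \mapsto 2i \bmod n$ is a bijection on $\{0,\ldots,n-1\}$, whence $\N_{q^{2n}/q^2}(c) = \N_{q^n/q}(c) \in \F_q^{*}$, making the $c$-factor contribute $1$. The $\gamma$-factor collapses to $\N_{q^{2n}/q^2}(\gamma)^{q-1}$, because $\N_{q^{2n}/q^2}(\gamma) \in \F_{q^2}^{*}$ and $s(n+2)$ is odd. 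If this quantity equalled $1$, then $\N_{q^{2n}/q^2}(\gamma) \in \F_q$, and by transitivity of norms $\N_{q^{2n}/q}(\gamma) = \N_{q^{2n}/q^2}(\gamma)^{1+q} = \N_{q^{2n}/q^2}(\gamma)^{2}$ would be a square in $\F_q$, contradicting the standing hypothesis on $\gamma$. This contradiction yields $\ker h = \{0\}$, so every nonzero element of $\tilde{\mathcal{H}}_s^\perp$ has rank $n$, and the design property follows.
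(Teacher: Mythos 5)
Your proof is correct and follows essentially the same route as the paper's: both reduce each nonzero element of $\tilde{\mathcal{H}}_s^\perp$ to a binomial by composing with a power of the $q^2$-Frobenius, translate invertibility into a norm condition over $\F_{q^2}$, and derive a contradiction from $\N_{q^{2n}/q}(\gamma)$ being a non-square. Your version is in fact slightly more careful in justifying the solvability criterion for $z^{q^{4s}-1}=-A/B$ via the subgroup of norm-one elements, a step the paper states without detail.
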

\begin{proof}
To prove the assertion it is enough to show that all the polynomials in $\tilde{\mathcal{H}}_s^\perp$ are invertible.
For this purpose, let $$f(x)= c \gamma^{-1} \alpha x^{q^{2s(\frac{n-1}2)}}+(c\gamma^{-1} \alpha)^{q^{s(n+2)}}x^{q^{2s\frac{n+3}2}},$$
with $c \in \F_{q^n}$ and $\alpha^{q-1}=-1$.

Clearly, $f\circ x^{q^{-s(n-1)}}= c \gamma^{-1} \alpha x+(c\gamma^{-1} \alpha)^{q^{s(n+2)}}x^{q^{2s}}$.
It has a nonzero root if and only if
\[ \N_{q^{2n}/q^2}\left((c \alpha \gamma^{-1})^{1-q^{s(n+2)}} \right)=-1. \]
Since
\[\N_{q^{2n}/q^2}\left(c^{1-q^{s(n+2)}}\alpha^{1-q^{s(n+2)}} \gamma^{q^{s(n+2)}-1} \right) = -\N_{q^{2n}/q^2}\left( \gamma^{q-1} \right). \]
Therefore, $c \gamma^{-1} \alpha x+(c\gamma^{-1} \alpha)^{q^{s(n+2)}}x^{q^{2s}}=0$ has a no-zero solution, if and only if
\[ \N_{q^{2n}/q^2}\left( \gamma^{q-1} \right)=1, \]
which implies that $\N_{q^{2n}/q^2}\left( \gamma \right) \in \F_q$. This is a contradiction since $\N_{q^{2n}/q}(\gamma)$ is a non-square in $\F_q$.
\end{proof}

Finally, we prove that construction exhibited in Theorem \ref{thm:newhermitiancode}, is equivalent to none of the known examples with involved parameters.
We need the following tools from \cite{LTZ1}, used by the authors in order to solve the equivalence issue for the family of generalized twisted Gabidulin codes.

Let $\mathcal{C}$ be a subset of $\mathcal{L}_{n,q}$.
The \emph{universal support} $\mathcal{S}(\C)$ of $\C$ is the subset of $\{0,1,\ldots,n-1\}$ defined as follows
\[ \mathcal{S}(\C)=\{i\colon \text{there exists}\,\,f \in \C\,\,\text{such that the}\,\,q^i\text{-coefficient of}\,\, f \,\,\text{is not zero}\}, \]
whereas an \emph{independent support} $B$ is a subset of $\{0,1,\ldots,n-1\}$ for which there exists a set $\{h_i \colon i \in B\}$ of permutations of $\F_{q^{n}}$ such that
\[ \left\{ \sum_{i \in B} h_i(a)x^{q^i} \colon a \in \F_{q^n} \right\}\subseteq \C. \]
Also, let $A$ and $B$ two subsets of $\{0,1,\ldots,n-1\}$, then
\[ A^B:=\{k \colon \text{there exists a unique pair}\,\,(i,j)\in A\times B\,\,\text{such that}\,\, k\equiv i+j \pmod{n}\}. \]
For two extended equivalent codes the following holds.

\begin{lemma}\cite[Lemma 4.6]{LTZ1}\label{support}
Let $\C_1$ and $\C_2$ two subsets of $\mathcal{L}_{n,q}$.
Assume that $\C_1$ and $\C_2$ are extended equivalent, i.e.  $\tau(\C_1)=\C_2$ for some $\tau$ as in \eqref{equivrm}.
Let $A$ be the support of $\{\tau(ax)\colon a \in \F_{q^n}\}$. Then
\[ A^B\subseteq \mathcal{S}(\C_2), \]
for every independent support.
\end{lemma}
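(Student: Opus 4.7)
The plan is to unwind the polynomial form of extended equivalence and track the $q$-exponents appearing in $\tau(f)$ for carefully chosen $f \in \C_1$. I would work with the first form in \eqref{equivrm}, namely $\tau(f) = g_1 \circ f^\sigma \circ g_2 + r$ with $g_1 = \sum_j \alpha_j x^{q^j}$, $g_2 = \sum_k \beta_k x^{q^k}$ permutation linearized polynomials and $\sigma\colon y \mapsto y^{q^s}$; the transposed form is handled identically after inserting an adjoint. Passing from $\C_1, \C_2$ to their translates $\C_1 - f_0$ and $\C_2 - \tau(f_0)$ for any $f_0 \in \C_1$ lets me assume $r = 0$, so that $\tau$ becomes additive, without affecting the inclusion to be proven.

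A direct composition yields $\tau(c\, x^{q^i}) = \sum_{j,k} \alpha_j \beta_k^{q^{i+j}} c^{q^{s+j}} x^{q^{i+j+k}}$, and specializing $i = 0$, $c = a$ identifies $A$ with the Minkowski sum
\[ A = \{\, j + k \bmod n \colon \alpha_j \beta_k \neq 0 \,\}. \]
Given an independent support $B$ of $\C_1$ with witnesses $h_i$, I would then apply $\tau$ additively to $f_a = \sum_{i \in B} h_i(a) x^{q^i} \in \C_1$ and read off the coefficient of $x^{q^m}$ in $\tau(f_a)$ as
\[ C_m(a) = \sum_{i \in B}\, \sum_{\substack{j,k \\ i+j+k \equiv m}} \alpha_j \beta_k^{q^{i+j}}\, h_i(a)^{q^{s+j}}. \]

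Now I would fix $m \in A^B$, with unique decomposition $m \equiv \ell_0 + i_0 \pmod n$ and $(\ell_0, i_0) \in A \times B$. The crux is to show that for every $i \in B \setminus \{i_0\}$ the inner sum vanishes identically: any surviving pair $(j,k)$ with $\alpha_j \beta_k \neq 0$ and $j + k \equiv m - i$ would place $m - i$ in $A$, producing a second admissible decomposition $m \equiv (m-i) + i$ and contradicting the uniqueness built into the definition of $A^B$. Hence $C_m(a)$ collapses to the $i = i_0$ contribution, which viewed as a function of $y = h_{i_0}(a)$ is the linearized polynomial $F(y) = \sum_{j} \alpha_j \beta_{\ell_0 - j}^{q^{i_0 + j}} y^{q^{s+j}}$. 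Since $\ell_0 \in A$ there exists some $j_1$ with $\alpha_{j_1} \beta_{\ell_0 - j_1} \neq 0$, so $F$ has a nonzero coefficient at $y^{q^{s+j_1}}$ and is not the zero polynomial; because $h_{i_0}$ is a permutation of $\F_{q^n}$, some $a$ yields $C_m(a) = F(h_{i_0}(a)) \neq 0$, and therefore $m \in \mathcal{S}(\C_2)$.

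The step I expect to be the main obstacle is precisely the cancellation bookkeeping that eliminates all $i \neq i_0$ contributions via $A^B$-uniqueness; once that is done, the nonvanishing of $F$ is immediate, since distinct $j$ contribute to distinct $q$-powers of $y$, ruling out cancellation among the remaining summands. The second form of $\tau$ in \eqref{equivrm} is treated by the same recipe using the adjoint formula recalled in Section \ref{sec:setting}: its effect is only to permute the exponent pattern $q^{i+j+k}$ in the composition formula, so the uniqueness argument carries over verbatim.
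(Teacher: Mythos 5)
The paper itself does not prove this lemma --- it is quoted verbatim from \cite{LTZ1} --- so I am comparing your argument against the one given there. Your core argument is exactly the standard one and is correct for maps of the form $\tau(f)=g_1\circ f^{\sigma}\circ g_2$: the composition formula $\tau(cx^{q^i})=\sum_{j,k}\alpha_j\beta_k^{q^{i+j}}c^{q^{s+j}}x^{q^{i+j+k}}$ identifies $A$ with $\{j+k \bmod n\colon \alpha_j\beta_k\neq 0\}$; the uniqueness built into $A^B$ forces $m-i\notin A$ for every $i\in B\setminus\{i_0\}$, killing all cross-terms; and the surviving coefficient is a linearized polynomial in $y=h_{i_0}(a)$ whose monomials $y^{q^{s+j}}$ have pairwise distinct exponents modulo $n$, so it is a nonzero map, and surjectivity of $h_{i_0}$ produces a codeword of $\C_2$ with nonzero $q^m$-coefficient. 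All of that is sound.

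Two steps, however, do not hold up as written. First, the reduction to $r=0$ by translating $\C_1$ and $\C_2$ is not harmless: it changes $\mathcal{S}(\C_2)$, it can destroy an independent support of $\C_1$ (the witnesses $\sum_{i\in B}h_i(a)x^{q^i}$ need not lie in $\C_1-f_0$ unless $f_0$ is supported on $B$), and it changes $A$ itself, since for $r\neq 0$ the support of $\{\tau(ax)\colon a\in\F_{q^n}\}$ is the union of $\{j+k\colon\alpha_j\beta_k\neq 0\}$ with the support of $r$. Indeed the literal statement with $r\neq 0$ is false: for $n=3$, $\tau(f)=f+x^{q}$ and $\C_1=\{ax^{q^2}\colon a\in\F_{q^3}\}$ one finds $A=\{0,1\}$, $B=\{2\}$, $A^B=\{0,2\}$, yet $\mathcal{S}(\C_2)=\{1,2\}$. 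The lemma must be read with $r=0$, as in \cite{LTZ1}; in the present paper this costs nothing because the codes are additive, so one may replace $\tau$ by $f\mapsto\tau(f)-\tau(0)$ without altering $A$, $B$ or $\mathcal{S}(\C_2)$. Second, the transposed form does not carry over ``verbatim'': replacing $f^{\sigma}$ by its adjoint turns the exponent $i+j+k$ into $j+k-i$, so the identical computation yields $A^{(-B)}\subseteq\mathcal{S}(\C_2)$ with $-B=\{-i\bmod n\colon i\in B\}$, which is not the stated conclusion. Neither issue affects the use made of the lemma in Section 6, where $r_0$ is removed by additivity and no adjoint of the codewords occurs, but both should be repaired (or the hypotheses restricted) if you intend your argument as a proof of the lemma as stated.
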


Now, we are ready to prove our final result.

\begin{theorem}
The $2$-code $\tilde{\mathcal{H}}_s$ is new.
\end{theorem}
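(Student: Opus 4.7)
I plan to rule out equivalence of $\tilde{\mathcal{H}}_s$ with every previously known maximum Hermitian $2$-code in turn. Under our hypotheses ($n$ odd, $d=2$), the only candidates from Section~\ref{sec:setting} are the diagonal-free code $\mathrm{M}$ of \eqref{example:Myriam} and the Schmidt family $\mathcal{H}_{n,2,s'}$ of \eqref{ex:H} indexed by odd $s'$ coprime to $n$; the family $\mathcal{E}_{n,d,s}$ requires $d$ odd and so does not contribute. For $\mathrm{M}$ the conclusion is immediate: by Corollary~\ref{cor:designproperty} the code $\tilde{\mathcal{H}}_s$ is an $(n-1)$-design, whereas Theorem~\ref{th:nodesiMir} asserts that $\mathrm{M}$ is not a $t$-design for any $t\geq 1$, and the design property is invariant under equivalence in $\mathrm{H}_n(q^2)$.

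For the comparison with $\mathcal{H}_{n,2,s'}$, both codes are maximum additive $(n-1)$-designs (hence $(n-2)$-designs) by Corollary~\ref{cor:designproperty} and Theorem~\ref{th:desiother}, and $d=2<n$, so Theorem~\ref{th:equiv} collapses equivalence in $\mathrm{H}_n(q^2)$ with extended equivalence in $\F_{q^2}^{n\times n}$; it thus suffices to prove that $\tilde{\mathcal{H}}_s$ and $\mathcal{H}_{n,2,s'}$ are not extended equivalent, and I will do this via Lemma~\ref{support}. First I will identify the supports in $\mathcal{L}_{n,q^2}$: the universal support of $\mathcal{H}_{n,2,s'}$ is the complement of the single residue $m:=s'(n+1)/2\bmod n$ (using $\gcd(s',n)=1$, the $(n-1)/2$ residues $s'j \bmod n$ together with the $(n-3)/2$ residues $s'(1-j)\bmod n$ and $0$ cover exactly $n-1$ classes), while the pairs of terms of $\tilde{\mathcal{H}}_s$ parametrised by $c_i\in\F_{q^{2n}}$ give the independent supports $B_i=\{si\bmod n,\, s(n-i+1)\bmod n\}$ for $i=1,\ldots,(n-3)/2$.

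Assuming for contradiction an extended equivalence $\tau:\tilde{\mathcal{H}}_s\to\mathcal{H}_{n,2,s'}$, Lemma~\ref{support} forces $A^{B_i}\subseteq \mathcal{S}(\mathcal{H}_{n,2,s'})$ for every $i$, where $A$ is the support of the image $\{\tau(ax):a\in\F_{q^{2n}}\}$. Since $B_i$ has size two, the condition $m\notin A^{B_i}$ unfolds to the biconditional ``$m-si\in A$ if and only if $m-s(n-i+1)\in A$'' for each $i$, and letting $i$ range through $1,\ldots,(n-3)/2$ produces a chain of such constraints on $A$. I will then combine these with the fact that $A$ itself decomposes as a Minkowski sum of the supports of the two invertible linearised polynomials building $\tau$, and so is forced to contain sufficiently many residue classes; the transpose/adjoint option in \eqref{equivrm} is handled by a symmetric variant of the same argument after negating indices. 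Together these force at least one of the shifts $A+si$ or $A+s(n-i+1)$ to hit $m$ uniquely, contradicting the Lemma.

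The main obstacle will be converting this family of biconditional constraints into an outright impossibility uniformly over every odd $s'$ coprime to $n$. The technical heart is quantifying just how large $|A|$ must be (from the invertibility of the linearised polynomials in $\tau$) and comparing this lower bound with the explicit value of $m=s'(n+1)/2$: the residue $m$ is pinned down by $s'$ in such a way that, for sufficiently large $A$, it cannot simultaneously be avoided by every symmetric difference $(A+si)\cup(A+s(n-i+1))\setminus[(A+si)\cap(A+s(n-i+1))]$ as $i$ varies, and this pigeonhole--type step is what closes the argument.
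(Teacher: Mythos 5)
Your reduction for $\mathrm{M}$ (design property plus Corollary \ref{cor:designproperty} and Theorem \ref{th:nodesiMir}) matches the paper, and your identification of $\mathcal{S}(\mathcal{H}_{n,2,s'})$ as the complement of the single residue $m=s'(n+1)/2\bmod n$, together with the use of Lemma \ref{support} on the independent supports $B_i$, is essentially how the paper rules out the case $s'\not\equiv\pm s\pmod n$. The problem is the remaining case $s'\equiv\pm s\pmod n$, which your plan does not close and, as designed, cannot close. When $s'\equiv\pm s$, the constraints ``$m\notin A^{B_i}$'' reduce to the biconditionals $s(\tfrac{n+1}{2}-i)\in A\Leftrightarrow s(\tfrac{n-1}{2}+i)\in A$ for $i=1,\dots,\tfrac{n-3}{2}$, and these are satisfied vacuously by $A=\{0\}$. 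Your proposed way out --- a lower bound on $|A|$ coming from invertibility of the linearised polynomials composing $\tau$, followed by a pigeonhole step --- fails at the first move: an invertible linearised polynomial can be a monomial $cx^{q^k}$, so the support of $\{\tau(ax)\colon a\}$ can have size $1$ and there is no nontrivial lower bound on $|A|$. Support-counting alone therefore cannot distinguish $\tilde{\mathcal{H}}_s$ from $\mathcal{H}_{n,2,\pm s}$.

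This is exactly where the paper exploits Theorem \ref{th:equiv} in the direction opposite to yours. Rather than trying to refute extended equivalence with the full (loosely coupled) class of maps \eqref{equivrm}, the paper assumes extended equivalence, upgrades it via Theorem \ref{th:equiv} (both codes being $(n-1)$-designs) to equivalence in $\mathcal{H}_n(q^2)$, and thereby gains the rigid form $\Theta_{a,g,\rho,r_0}(f)=ag\circ f^{\rho}\circ g^{\top q^{2n-1}}+r_0$ in which the two outer polynomials are \emph{the same} $g$ up to adjoint and conjugation. For $s'\equiv\pm s$ it then computes the coefficient of $x^{q^{2s(n+1)/2}}$ in $g^{\top q}\circ b^{\rho}x^{q^{2s(n+1)/2}}\circ g$, namely $\sum_i g_i^{q^{s(2n-2i+1)}}g_i^{q^{2s((n+1)/2-i)}}b^{\rho q^{2s(n-i)}}$, and observes that forcing it to vanish for every $b\in\F_{q^n}$ (as required, since $m\notin\mathcal{S}(\mathcal{H}_{n,2,\pm s})$) contradicts $g$ being a permutation polynomial. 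You need either this coefficient computation or some other argument that uses the coupling between the two sides of the equivalence map; as written, your proof has a genuine gap in the aligned case.
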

\begin{proof}
We first remind that, by Theorem \ref{th:nodesiMir}, the $2$-code $\mathrm M$ described in (\ref{example:Myriam}), is not a $t$-design for any $t\neq 0$. Then, by Corollary \ref{cor:designproperty}, it is plain that $\tilde{\mathcal{H}}_s$ cannot be equivalent to $\mathrm M$.

On the other hand, assume by way of contradiction that $\tilde{\mathcal{H}}_s$ is extended equivalent to $\cH_{n,2,\ell}$. Since both codes are $(n-1)$-designs, as a direct consequence of Theorem \ref{th:equiv} and Corollary \ref{cor:equiv}, then they have to be equivalent in $\mathcal{H}_n(q^2)$, i.e. there must be a map of type $\Theta_{a,g^{\top q},\rho}$ such that $\Theta_{a,g^{\top q},\rho}(\tilde{\cH}_s)=\cH_{n,2,\ell}$, for given $a \in \F^*_{q}$, $\rho \in \Aut(\F_{q^2})$, and $g(x)=\sum_{i=0}^{n-1} g_i x^{q^{2i}}$ a permutation $q^2$-polynomial over $\F_{q^{2n}}$.

In what follows we will first prove that under this assumption, it must necessarily be $\ell \equiv \pm s \,(mod\,n)$. In fact, suppose that $\ell \not\equiv \pm s$. As $n$ is odd, we mast have that $(\ell,n)=1$, and hence there must be an $1<l<n-1$ such that $s\equiv l\ell \,(mod\,n)$.
\medskip

\noindent Let $A$ be the universal support of $\{g^{\top q}\circ a x \circ g(x) \colon a \in \F_{q^{2n}}\}$, and ${\mathcal S}(\cH_{n,2,\ell})$ be the universal support of $\cH_{n,2,\ell}$. By applying Lemma \ref{support} we get that $A^B \subseteq \mathcal{S}(\cH_{n,2,\ell})$ for each set of independent supports $B$ of $\tilde{\cH}_s$.

Now, consider the set \[\bigg \{is, (2n-2i+1)s\, \colon \, i=1,2,...,\frac{n-1}{2}\bigg \} \bigg \},\] which is a set of independent supports of $\tilde{\cH}_s$.

If $j\in A$, applying again Lemma \ref{support}, we get that \[\bigg \{ j+is:\, j+(2n-2i+1)s \,\colon \, i \in \bigg \{1,2,...,\frac{n-1}{2}\bigg \} \bigg \} \subseteq {\mathcal S}(\cH_{n,2,\ell}).\]

Hence, \[\bigg \{j+is; \,\,j+ (2n-2i+1)s \,\,\colon \, i \in \bigg \{1,2,...,\frac{n-1}{2} \bigg \} \bigg \} \subseteq \bigg \{i\ell; (2n-i+1)\ell \,\colon \, i \in \bigg \{1,2,...,\frac{n-1}{2} \bigg \} \bigg \}. \]

Letting \,$j \equiv u\ell \,(mod\,n)$ with $u\in \bigg \{1,2,...,\frac{n-1}{2}\bigg \}$ in above equation, and plugging in $s \equiv l\ell \,(mod\,n)$, we get $$\bigg \{u+il \,\text{ and } u+l(2n-i+1) \, \colon \, i=1,...,\frac{n-1}{2}\bigg \} \subseteq \bigg \{i, n-i+1 \,\colon \, i=1,...,\frac{n-1}{2}\bigg \}.$$ But since $l \geq 2$ and $u\in \bigg \{1,2,...,\frac{n-1}{2}\bigg \}$, this can never be the case. Hence, we end up with $\ell \equiv \pm s \,(mod\,n)$.

In this case consider the map $g^{\top q}\circ b^\rho x^{q^{2s\frac{n+1}{2}}} \circ g$. A direct computation shows that the coefficient of the term with $q$-degree $q^{2s\frac{n+1}{2}}$ in it, equals to

\begin{equation*}
a_{\frac{n+1}{2}}(b)=\sum^{n-1}_{i=0} g^{q^{s(2n-2i+1)}}_{i}\,\,g_{i}^{q^{s2(\frac{n+1}{2} -i)}}b^{\rho q^{s2(n-i)}}.
\end{equation*}

Since $(s,2n)=1$, the coefficients $g^{q^{s(2n-2i+1)}}_{i}\,\,g_{i}^{q^{s2(\frac{n+1}{2} -i)}}$ belongs to $\F_{q^n}$. As the coefficient of the term with $q^{2\ell}$-degree $\frac{n+1}{2}$ in $\cH_{n,2,\ell}$ is zero, and since $\ell \equiv \pm s \pmod{n}$, we get that $a_{\frac{n+1}{2}}(b)$ must be zero for each $b \in \F_{q^n}$. But this finally contradicts the fact that $g$ is a permutation polynomial.

Hence, we may conclude that $\tilde{\cH}_s$ is equivalent to none of the two existing examples with the involved parameters.
\end{proof}

\section{Concluding remarks and open problems}

In this article we provide some conditions ensuring the identification of the two types of equivalences which can be naturally defined for maximum additive $d$-codes in the Hermitian association scheme. More precisely in Theorem \ref{th:equiv} we prove that the equivalence and the extended equivalence coincide for maximum additive Hermitian $d$-codes with $d<n$ which are also $(n-d)$-designs. As a byproduct, in Corollary \ref{cor:equiv}  we prove that the equivalence and the extended equivalence coincide, whenever we deal with two maximum additive Hermitian $d$-codes with $d<n$ and $d$ odd.
However, it is an open question whether or not this holds true also for maximum additive Hermitian $d$-codes with $d<n$ and $d$ even, which are not $(n-d)$-designs.

Also, it would be interesting to understand whether the same result holds for maximum additive codes in the alternating setting.
In addition, we do not know whether Lemma \ref{tec} may be used for constructing new examples of $2$-codes in such a context.

Furthermore, one of the most important open problems regards the construction of maximum Hermitian $d$-codes for $3<d<n-1$ with $n$ and $d$ both even.
Probably, further investigations on the relations between the coefficients of a linearized polynomial and the dimension of its kernel (i.e. by using results contained in \cite{qres,teoremone,PZ}) may lead to new constructions for some fixed value of $n$.

\noindent Rocco Trombetti\\
Dipartimento di Matematica e Applicazioni ``Renato Caccioppoli"\\
Università degli Studi di Napoli ``Federico II",\\
Via Cintia, Monte S.Angelo I-80126 Napoli, Italy\\
{{\em rtrombet@unina.it}}

\smallskip

\noindent Ferdinando Zullo\\
Dipartimento di Matematica e Fisica,\\
Universit\`a degli Studi della Campania ``Luigi Vanvitelli'',\\
Viale Lincoln, 5 \\
I--\,81100 Caserta, Italy\\
{{\em ferdinando.zullo@unicampania.it}}

\end{document}